\newcommand{\bt}{\begin{theorem}}
\newcommand{\et}{\end{theorem}}
\newcommand{\bi}{\begin{itemize}}
\newcommand{\ei}{\end{itemize}}
\newcommand{\bea}{\begin{eqnarray}}
\newcommand{\ba}{\begin{array}}
\newcommand{\eea}{\end{eqnarray}}
\newcommand{\ea}{\end{array}}
\newcommand{\be}{\begin{equation}}
\newcommand{\ee}{\end{equation}}
\newcommand{\lgra}{\longrightarrow}%
\newcommand{\what}{\widehat}%
\newcommand{\wtilde}{\widetilde}%
\newcommand{\Ab}{\mathcal A}%
\newcommand{\R}{\mathbb R}%
\newcommand{\C}{\mathbb C}%
\newcommand{\Z}{\mathbb Z}%
\newcommand{\N}{\mathbb N}%
\newtheorem{theorem}{Theorem}[section]
\theoremstyle{definition}
\newtheorem{conjecture}[theorem]{Conjecture}
\theoremstyle{definition}
\numberwithin{equation}{subsection}
\numberwithin{theorem}{subsection}
\begin{document}
\baselineskip16pt
\author[S. K. Ray]{Swagato K. Ray }
\address[S. K. Ray]{Stat-Math Unit, Indian Statistical
Institute, 203 B. T. Rd., Calcutta 700108, India}
\email{swagato@isical.ac.in}

\author[R. P. Sarkar]{Rudra P. Sarkar}
\address[R. P. Sarkar]{Stat-Math Unit, Indian Statistical
Institute, 203 B. T. Rd., Calcutta 700108, India}
\email{rudra@isical.ac.in}

\subjclass[2010]{Primary 43A85; Secondary 22E30}
\keywords{Riemannian symmetric
spaces, Spectrum of Laplacian, eigenfunction of Laplacian}

\title[Roe's  Theorem]{A theorem of  Roe and Strichartz for Riemannian symmetric spaces of noncompact type}
\begin{abstract} Generalizing a result of Roe  \cite{Roe} Strichartz proved in \cite{Str}  that
if  a doubly-infinite sequence $\{f_k\}$ of functions on $\R^n$ satisfies $f_{k+1}=\Delta f_k$ and  $|f_{k}(x)|\leq M$ for all $k=0,\pm 1,\pm 2,\cdots$ and $x\in \R^n$,
then $\Delta f_0(x)= -f_0$.  Strichartz also showed that the result  fails for hyperbolic $3$-space. This negative result can be
indeed extended to any Riemannian symmetric space of noncompact type.
Taking this into account we shall prove that for
all  Riemannian symmetric spaces of noncompact type the theorem actually holds
true when uniform boundedness is modified suitably.
 \end{abstract}
 \maketitle
\section{Introduction}
Generalizing a result of Roe \cite{Roe},   Strichartz  (\cite{Str}) proved the following theorem on $\R^n$. (See also \cite{Howd-Reese} and the references therein.)
\begin{theorem}[Strichartz] \label{str-thm}
Let $\{f_j\}_{j\in\Z}$ be a doubly infinite sequence of measurable functions on
$\R^n$ such that for all
$j\in\Z$, $(\mathrm{i})$ $\|f_j\|_{L^{\infty}(\R^n)}\leq C$ for some constant $C>0$  and $(\mathrm{ii})$ for some $\alpha>0$,  $L f_j=\alpha f_{j+1}$ where  $L=\sum_{i=1}^n\frac{\partial^2}{\partial x_i^2}$. Then $L f_0=-\alpha f_0.$\label{bob}
\end{theorem}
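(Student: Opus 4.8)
The plan is to pass to the Fourier transform and show that the two hypotheses force $\what{f_0}$ to be a density carried by the sphere $S=\{\xi\in\R^n:\ |\xi|^2=\alpha\}$. Since each $f_j$ is bounded it is a tempered distribution, so $\what{f_j}\in\mathcal{S}'(\R^n)$ is defined, and the desired conclusion $Lf_0=-\alpha f_0$ is equivalent to the distributional identity $(|\xi|^2-\alpha)\what{f_0}=0$. Iterating hypothesis $(\mathrm{ii})$ forward gives $L^k f_0=\alpha^k f_k$, hence $\what{f_k}=(-1)^k m^k\,\what{f_0}$ with $m(\xi)=|\xi|^2/\alpha$; iterating it backward gives $L^k f_{-k}=\alpha^k f_0$, hence $\what{f_0}=(-1)^k m^k\,\what{f_{-k}}$. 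Thus the single bounded function $f_0$ is tied, in both directions, to the whole uniformly bounded family $\{f_j\}_{j\in\Z}$.

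First I would localise $\what{f_0}$ to $S$. Fix $\phi\in C_c^\infty$ supported in the exterior region $\{|\xi|>\sqrt{\alpha}+\epsilon\}$, where $m\geq 1+\delta$. Writing $\what{f_0}=(-1)^k m^{-k}\what{f_k}$ and pairing against $\phi$,
\be
|\langle\what{f_0},\phi\rangle|=|\langle f_k,\what{m^{-k}\phi}\,\rangle|\leq C\,\|\what{m^{-k}\phi}\|_{L^1(\R^n)},
\ee
and since $m^{-k}$ together with its derivatives decays geometrically on $\operatorname{supp}\phi$, the right-hand side tends to $0$ (bounding the $L^1$ norm of a Fourier transform by a Sobolev norm). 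Hence $\what{f_0}$ vanishes on $\{|\xi|>\sqrt{\alpha}\}$. The analogous computation with the backward relation $\what{f_0}=(-1)^k m^k\what{f_{-k}}$ and $\phi$ supported in the interior $\{|\xi|<\sqrt{\alpha}\}$ (where $m\leq 1-\delta$ and $m^k$ is smooth up to the origin) shows $\what{f_0}$ vanishes on $\{|\xi|<\sqrt{\alpha}\}$ as well. Therefore $\operatorname{supp}\what{f_0}\subseteq S$.

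The heart of the matter, and the step I expect to be the main obstacle, is to upgrade ``$\operatorname{supp}\what{f_0}\subseteq S$'' to the eigenequation $(|\xi|^2-\alpha)\what{f_0}=0$. A distribution carried by $S$ has a normal form $\sum_{i=0}^N\mu_i$, where $\mu_i$ is an $i$-th transverse derivative of a distribution on $S$, and the factor $(|\xi|^2-\alpha)$ annihilates it precisely when the transverse order $N$ equals $0$. It is exactly here that the \emph{doubly}-infinite hypothesis is indispensable: in odd dimensions (already for $\R^3$) a first transverse derivative of surface measure has a \emph{bounded} inverse Fourier transform, so the boundedness of $f_0$ alone --- equivalently of the forward half of the sequence --- cannot exclude $N\geq 1$. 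The mechanism I would exploit is that multiplying $\what{f_0}$ by $m^{-k}=(\alpha/|\xi|^2)^k$ leaves the top transverse coefficient essentially unchanged but, because $\partial_\nu^{\,i}m^{-k}\big|_S$ is a constant of size $\sim k^{\,i}$, pumps an amount of mass growing like a positive power of $k$ down into the lower transverse orders of $\what{f_{-k}}$, in particular into the order-$0$ layer, whose inverse Fourier transform is bounded in space. If $N\geq1$ this forces $\|f_{-k}\|_\infty\to\infty$, contradicting the uniform bound. Hence $N=0$, so $\what{f_0}$ is a measure on $S$, and $(|\xi|^2-\alpha)\what{f_0}=0$, i.e. $Lf_0=-\alpha f_0$.

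Making the final paragraph rigorous is where the work lies: one must convert the heuristic ``the order-$0$ component of $\what{f_{-k}}$ grows like $k^N$'' into an honest lower bound for $\|f_{-k}\|_\infty$. The model computation is a stationary-phase analysis showing that a transverse layer of order $i$ on $S$ produces spatial growth of exact order $|x|^{\,i-(n-1)/2}$, so that the order-$0$ contribution is genuinely bounded yet of amplitude comparable to its (now $k$-dependent) coefficient. The care needed is in controlling the tangential structure of the layers $\mu_i$ uniformly in $k$ so that no cancellation masks this growth; once that book-keeping is in place, letting $k\to\infty$ yields the contradiction and completes the proof.
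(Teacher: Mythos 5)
Your first step (localising $\mathrm{supp}\,\what{f_0}$ to the sphere by iterating the recursion in both directions and pairing against test functions supported off the sphere) is correct, and it is exactly how the paper's own distributional version (Theorem 3.1.1) begins. The genuine gap is precisely where you say the heart of the matter lies, and it is not mere book-keeping. Your mechanism for excluding transverse order $N\ge 1$ rests on two claims that fail at the generality you need: (a) that the order-$0$ layer of $\what{f_{-k}}$ has a bounded inverse Fourier transform whose sup is comparable to its coefficient, and (b) that an order-$i$ layer produces spatial growth of exact order $|x|^{\,i-(n-1)/2}$. Both are true when the layers are smooth densities on $S$, but the layers $\mu_i$ in the normal form of a tempered distribution carried by $S$ are themselves arbitrary finite-order distributions on the sphere: tangential derivatives also contribute polynomial factors of $|x|$ to the inverse Fourier transform, so an order-$0$ layer need not be bounded at all, and when it is, its sup can be much smaller than its ``mass''. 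In addition one must rule out cancellation between the $k$-dependent layers at the spatial scales where the sup would be attained. So the step that actually proves the theorem is left as a heuristic; as written, the proposal establishes only the support statement.

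The good news is that from exactly the point you reached, no lower bound on $\|f_{-k}\|_\infty$ is needed; upper bounds plus a weak limit suffice, and this is the argument of Strichartz and Howard--Reese to which the paper defers at this very step. Since $\what{f_0}$ is a tempered distribution with compact support contained in $S$, it has finite order, whence $(|\xi|^2-\alpha)^{N+1}\what{f_0}=0$ for some finite $N\ge 0$. Writing $m^j=(1+(m-1))^j$ with $m=|\xi|^2/\alpha$ and using this nilpotency in $\what{f_j}=(-1)^j m^j \what{f_0}$ gives
\[ f_j=(-1)^j\sum_{i=0}^{N}\binom{j}{i}(-\alpha)^{-i}(L+\alpha)^i f_0 .\]
If $N\ge 1$, divide by $\binom{j}{N}$ and let $j\to\infty$ through even integers: the left-hand side tends to $0$ uniformly (since $\|f_j\|_\infty\le C$), hence in $S(\R^n)'$, while the right-hand side tends to $(-\alpha)^{-N}(L+\alpha)^N f_0$ because $\binom{j}{i}/\binom{j}{N}\to 0$ for $i<N$. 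Hence $(L+\alpha)^N f_0=0$, and descending induction on $N$ yields $(L+\alpha)f_0=0$. Replacing your stationary-phase programme by this limiting argument closes the gap, and it also explains why the doubly infinite, uniformly bounded family enters: it supplies the bound for arbitrarily large $j$ that kills the top binomial coefficient.
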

 Strichartz also observed in the same paper \cite{Str} that  the result holds true for Heisenberg groups $\mathbb H^n$, but  fails for hyperbolic $3$-space. A slight generalization of the counter example given in \cite{Str} shows that in any Riemannian symmetric space $X$ of noncompact type there is a sequence of functions $\{f_j\}$ which satisfies the hypothesis (where the Laplace-Beltrami operator $\Delta$ on $X$ replaces  $L$), but $f_0$ is not an eigenfunction of $\Delta$  (see \cite{KRS-2}). We take this negative result as our starting point. Aim of this paper is to prove an analogue of Strichartz's result for all Riemannian symmetric spaces $X=G/K$ of noncompact type.

From the counter example provided by Strichartz in \cite{Str} it is not difficult to perceive  that the failure is influenced by the spectral properties of the Laplacian $\Delta$ of the symmetric space. More precisely the failure is due to the difference  between the $L^2$ and $L^\infty$-spectrum  of $\Delta$, which in turn depends on the exponential volume growth of the underlying manifold $X$. This sets the task of searching for a possible analogue  conducive to the structure of the space. We began our study in this direction with the rank one symmetric space in \cite{KRS-2}, where  the situation was saved, substituting  the $L^\infty$-norm by the weak $L^2$-norm. However the use of weak $L^2$-norm seems to be restrictive to the rank one case. The following observation can be considered as  a first indication of this. We recall that the foremost examples of eigenfunctions of $\Delta$ are  the elementary spherical functions $\phi_\lambda$ with $\lambda\in \mathfrak a^\ast$. Unlike the rank one case, in general rank,  these eigenfunctions    do not belong to the weak $L^2$-space.   We also recall that on $X$ the objects  which correspond to $e^{i\langle \lambda, x\rangle}$ (on $\R^n$) are $e_{\lambda, k}: x\mapsto e^{-(i\lambda+\rho)H(x^{-1}k)}, k\in K, \lambda\in \mathfrak a^\ast$. They are the basic eigenfunctions of $\Delta$  which are constant on each horocycles $\xi_{k, C}=\{x\in G/K\mid H(x^{-1}k)=C\}$; but  unlike their counterparts in the Euclidean case, $e_{\lambda, k}$ are not $L^\infty$-functions, in particular they do not satisfy the hypothesis of Strichartz's theorem. As in the Euclidean case,   we have another set of prominent eigenfunctions namely  the Poisson transforms $P_\lambda F(x)=\int_{K/M} e_{\lambda, k}(x)F(k)dk, \lambda\in \mathfrak a^\ast$ of $L^p$-functions $F$ on the boundary $K/M$ of the space $X$  for some $p\ge 1$.
Taking all these into account we motivate ourselves  to look for a size-estimate which accommodates a large class of eigenfunctions of $\Delta$, including those mentioned above. One such is the so-called ``Hardy-type'' norm introduced in \cite{Stoll}  and used  effectively in \cite{BOS}. We shall use this norm to formulate  our main result, which is stated below.
(For any unexplained notation see Section 2.)

\begin{theorem} \label{symm-main}
Let $\{f_j\}_{j\in\Z}$ be a doubly infinite sequence of measurable functions on
$X$ such that for some  real number $\alpha$ and for all
$j\in\Z$:
 \begin{enumerate}
 \item[(i)] $\Delta f_j=(\alpha^2+|\rho|^2) f_{j+1}$,
 \item[(ii)] for   a fixed $p\ge 1$, $\|f_j(\cdot a)\|_{L^p(K)}\le C_{p}  \phi_0(a)$ for all $a\in \overline{A^+}$ and for a  constant $C_{p}>0$ depending only on  $p$.
 \end{enumerate}
Then $\Delta f_0=-(\alpha^2+|\rho|^2) f_0$. In particular if $\alpha=0, p>1$ then $f_0(x)=P_0F(x)=\int_K e^{-\rho(H(x^{-1}k)}F(k)dk$ for some $F\in L^p(K/M)$ and if $\alpha=0, p=1$ then $f_0=P_0\mu(x)=\int_K e^{-\rho(H(x^{-1}k)}d\mu(k)$ for some signed measure $\mu$ on $K/M$.
\end{theorem}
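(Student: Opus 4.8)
The plan is to establish the eigenvalue equation $(\Delta+(\alpha^2+|\rho|^2))f_0=0$ first, and then to read off the Poisson representation in the case $\alpha=0$ from the known characterization of Hardy-type eigenfunctions. Write $c=\alpha^2+|\rho|^2>0$. Iterating (i) gives $\Delta^j f_0=c^j f_j$ and $\Delta^j f_{-j}=c^j f_0$ for $j\ge 0$, so that, on the spectral side, each $f_j$ arises from $f_0$ through the radial multiplier $(-m(\lambda))^j$, where $m(\lambda)=(|\lambda|^2+|\rho|^2)/c$ satisfies $m=1$ exactly on the sphere $S=\{\lambda\in\mathfrak a^\ast:|\lambda|=|\alpha|\}$, with $m>1$ outside $S$ and $0<m<1$ inside. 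The desired equation is equivalent to the Helgason--Fourier transform of $f_0$ being concentrated on $S$, where $\Delta$ acts as the scalar $-c$.

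First I would convert hypothesis (ii) into a single uniform convolution estimate at the origin $o=eK$. For a $K$-bi-invariant $\psi$, the Cartan decomposition and the right $K$-invariance of the $f_j$ give $(f_j*\psi)(o)=\int_{\overline{A^+}}\bigl(\int_K f_j(ka)\,dk\bigr)\psi(a)\,\delta(a)\,da$, where $\delta(a)$ is the Cartan density; Hölder's inequality and (ii) then yield $|(f_j*\psi)(o)|\le C_p\,N(\psi)$ for every $j$, where $N(\psi)=\int_{\overline{A^+}}|\psi(a)|\,\phi_0(a)\,\delta(a)\,da$ is the natural weighted Hardy norm on radial functions (the weight $\phi_0$ is forced, since $|\phi_\lambda|\le\phi_0$ for real $\lambda$ makes $\phi_0$ the envelope of the whole real spectrum). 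The heart is then a two-sided concentration argument. Fix $\theta>0$ and a radial $\psi$ with $\hat\psi$ supported in $\{|\lambda|\ge|\alpha|+\theta\}$, where $m\ge 1+\eta$. Choosing radial $\chi_j$ with $c^{-j}\Delta^j\chi_j=\psi$ (legitimate since $\hat\chi_j=(-m)^{-j}\hat\psi$ and $m>1$ on the support), one has $f_0*\psi=f_j*\chi_j$, whence $|(f_0*\psi)(o)|=|(f_j*\chi_j)(o)|\le C_p\,N(\chi_j)$; letting $j\to+\infty$ forces $(f_0*\psi)(o)=0$ provided $N(\chi_j)\to 0$. Symmetrically, if $\hat\psi$ is supported in $\{|\lambda|\le|\alpha|-\theta\}$ one sets $\chi_j'=c^{-j}\Delta^j\psi$, so $\hat\chi_j'=(-m)^{j}\hat\psi\to 0$, uses $f_0*\psi=f_{-j}*\chi_j'$, and exploits the tail $j\to+\infty$ of the negatively indexed functions. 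Thus both tails of the doubly infinite sequence are used, exactly as in Theorem \ref{str-thm}, and we obtain $(f_0*\psi)(o)=0$ for every radial $\psi$ with $\hat\psi$ supported off $S$.

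The step I expect to be the main obstacle is twofold. First, one must prove $N(\chi_j)\to 0$, that is, that $(-m)^{-1}$ acts as a contraction of $N$ on the high-frequency region; this rests on the Trombi--Varadarajan-type fact that finiteness of $N$ corresponds to spherical transforms that are holomorphic and bounded on the tube $\mathfrak a^\ast+i\,\mathrm{conv}(W\rho)$, together with care near the zeros of $m$ (equivalently the bottom of the spectrum $\langle\lambda,\lambda\rangle=-|\rho|^2$), which is precisely where the exponential volume growth of $X$ enters. Second, one must globalize from $o$ to all of $X$ and upgrade the support statement to the differential equation: using that (ii) is preserved, with a controlled constant and a $\phi_0$-type weight in the space variable, under right convolution by nice radial functions (so that $\{f_j*\psi\}$ again satisfies (i)--(ii)), one promotes $(f_0*\psi)(o)=0$ to $f_0*\psi\equiv 0$ for $\hat\psi$ off $S$; the two-sided uniform bound then also forces the spectral measure of $f_0$ to be of order zero on $S$, so that convolving with an approximate identity $\phi_\epsilon$ and multiplying by $c(1-m)$ (which vanishes on $S$) gives $(\Delta+c)(f_0*\phi_\epsilon)=0$, whence $(\Delta+c)f_0=0$ in the limit $\epsilon\to 0$.

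Finally, for $\alpha=0$ one has $c=|\rho|^2$ and the eigenvalue is the bottom $-|\rho|^2$; hypothesis (ii) is then exactly the Hardy-type boundedness of \cite{Stoll,BOS}, so their characterization of such eigenfunctions as Poisson integrals applies and yields $f_0=P_0F$ with $F\in L^p(K/M)$ when $p>1$, and $f_0=P_0\mu$ for a signed measure $\mu$ on $K/M$ when $p=1$, completing the proof.
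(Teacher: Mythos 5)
Your architecture is sound and is, in spirit, the same two-sided multiplier iteration that drives the paper's proof: the conversion of hypothesis (ii) into the uniform pairing bound $|(f_j\ast\psi)(o)|\le C_p N(\psi)$ via polar coordinates and H\"older is essentially the computation the paper performs to show each $f_j$ is an $L^2$-tempered distribution with a $j$-independent seminorm bound, and the $\alpha=0$ endgame via \cite{BOS} is identical. The difference is that you propose to run the multiplier argument directly on $X$ with spherical convolutions, whereas the paper transfers everything to $\R^l$ (Abel transform on $K$-biinvariant Schwartz functions, adjoint map on distributions, $K$-averaging of translates plus heat-kernel injectivity to drop radiality) and then quotes the Euclidean distributional theorem. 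The problem is that the two steps you yourself flag as ``main obstacles'' are genuine gaps, and the tool you propose for the first one does not work. To get $N(\chi_j)\to 0$ you must bound a weighted $L^1$-norm of $\chi_j$ in the space variable by data about its spherical transform $\widehat{\chi_j}=(-m)^{-j}\widehat{\psi}$; the Trombi--Varadarajan-type fact you cite goes the wrong way (finiteness of $N(\chi)$ gives boundedness and holomorphy of $\widehat{\chi}$ on the tube over $\mathrm{conv}(W\rho)$, not conversely), and within that framework the zeros of $m$ at $\lambda=\pm iw\rho$ on the tube's boundary are a real nuisance. What actually closes this step is the Schwartz-space isomorphism of Anker \cite{Ank1} (equivalently the Abel-transform isomorphism $\Ab:C^2(G//K)\to S(\R^l)_W$ plus slice projection): Schwartz seminorms of $(-m)^{-j}\widehat{\psi}$ decay geometrically since $m\ge 1+\eta$ on $\mathrm{supp}\,\widehat{\psi}$ (on real $\mathfrak a^\ast$ the zeros of $m$ are irrelevant, since $m\ge|\rho|^2/c>0$ there), and the isomorphism converts this into decay of the $C^2(G//K)$-seminorms of $\chi_j$, which dominate $N(\chi_j)$. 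In other words, the missing lemma is precisely the device around which the paper organizes its entire proof.

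The second, more serious gap is the passage from ``spectral support in the sphere $S$'' to the equation $(\Delta+c)f_0=0$, which is the actual crux of any Roe--Strichartz theorem and which you dispose of in one clause (``the two-sided uniform bound then also forces the spectral measure of $f_0$ to be of order zero on $S$''). A distribution supported on $S$ can involve transversal derivatives --- on $\R^n$ the function $f(x)=(x\cdot e)\,e^{i\alpha x\cdot e}$ has Fourier transform supported on the sphere of radius $\alpha$ yet is not an eigenfunction --- and ruling this out is a separate argument: one shows that some finite power $(\Delta+c)^{N+1}$ annihilates $f_0$ (automatic for distributions supported on a sphere) and then uses the uniform bound over all $j\in\Z$ a second time to force $N=0$. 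The paper does not reprove this either, but by transferring to $\R^l$ it can invoke the argument verbatim from Howard--Reese \cite{Howd-Reese}; your direct route would require transplanting that argument to the weighted norm $N$, which the proposal does not do. (Two minor repairs: to globalize from $o$ you really want stability of (ii) under left translation, with constant $C_x$ coming from $\phi_0(x)\le C_y\phi_0(yx)$, rather than stability under convolution, since $(f_0\ast\psi)(x)$ is a pairing of the translated sequence at the origin; and the limit $\epsilon\to 0$ should be taken along the heat kernel $h_\epsilon$, for which $T\ast h_\epsilon\to T$ distributionally is available.) Both gaps are fillable, and the most economical way to fill them is exactly the paper's transference.
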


The particular case of  $p=\infty$ in the condition (ii) of the  hypothesis  simplifies as $|f_j(x)|\le C\phi_0(x)$ for all $x\in X$ and $j\in \Z$, and is close to the estimate used in Theorem \ref{bob}.
We take a $\lambda\in \mathfrak a^\ast$ with  $|\lambda|^2=\alpha^2$. Then for any fixed $k\in K$, the sequence of functions $\{(-1)^j\,e_{\lambda, k}\}_{j\in \Z}$   satisfy the hypothesis with   $p=1$.  For such a $\lambda$ and for any $1\le p<\infty$, the sequence of Poisson transforms $\{(-1)^j\, P_\lambda F\}_{j\in \Z}$ for any $F\in L^p(K/M)$ also satisfy the hypothesis. (See Theorem \ref{Ben} in Section 2.)

The result above may  also be viewed from the following perspective. In \cite{BOS} Ben sa\"{i}d et. al. used the Hardy-type norm to characterize a large family of eigenfunctions  of $\Delta$  as Poisson integral of functions in $L^p(K/M)$. However their result does not apply to the case where the eigenvalues are of the form $|\lambda|^2+|\rho|^2, \lambda\in \mathfrak a^\ast, \lambda\neq 0$. This makes the situation very close to the Euclidean in the following sense. While it is well known that bounded harmonic functions on $\R^n$ are constants,  there is no simple characterization of bounded eigenfunctions of $L$ on $\R^n$ with nonzero real eigenvalues.  Strichartz's result on the other hand deals with the latter case. Analogously, Theorem \ref{symm-main} endeavors to ``capture''  eigenfunctions of $\Delta$ with eigenvalues of the form mentioned above.

The space $X=G/K$ enjoys a dichotomy as it can be viewed as a  solvable  Lie group,  $S=N\rtimes A$ through the Iwasawa decomposition of $G=NAK$. The group $S$ is  amenable  like $\R^n$ and $\mathbb H^n$, though unlike them $S$ is nonunimodular. On $S$, one considers a second order right $S$-invariant differential operator $\mathcal L$ which is known as the {\em distinguished Laplacian}. Unlike $\Delta$, the $L^p$-spectrum of $\mathcal L$ for $1\le p<\infty$ coincides with the $L^2$-spectrum (see e.g. \cite{GM}).  This intrigues us to formulate a version of Theorem \ref{symm-main} substituting $\Delta$ by  $\mathcal L$, which is the next result.
\begin{theorem}\label{thm-distinguished}
Let  $\{f_j\}_{j\in \Z}$ be a doubly infinite sequence of measurable
functions defined on $S$.  If for some  real number $\alpha>0$ and for some  $C>0$,
\[{\mathcal L} f_j=
(\alpha^2+|\rho|^2) f_{j+1},\quad |f_j(x)|\le C  \delta(x), \text{ for all } x\in S, \text{ for all } j\in \Z,\]  where $\delta$ is the modular function of $S$, then
$\mathcal L f_0= \alpha f_0$.
\end{theorem}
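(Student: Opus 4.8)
The plan is to transport to a weighted $L^\infty$-space the spectral argument that is transparent in $L^2$: a two-sided sequence with $\mathcal L f_j=\beta f_{j+1}$ that is uniformly bounded must be ``spectrally concentrated'' at the single value $\beta:=\alpha^2+|\rho|^2$. First I would record the functional-analytic setup from Section 2 and \cite{GM}: $\mathcal L$ is (essentially) self-adjoint and positive, with $L^2$-spectrum a half-line bounded below (bottom $|\rho|^2$), and, crucially, its $L^p$-spectrum coincides with its $L^2$-spectrum for $1\le p<\infty$. This coincidence yields a Mikhlin--Hörmander calculus: for $m\in C_c^\infty(\R)$, $m(\mathcal L)$ is a convolution operator whose kernel $k_m$ is integrable, with $\|k_m\|_{L^1}\lesssim\|m\|_{(s)}$ for a fixed Sobolev norm. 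I then introduce the space $W=\{f:\|f\|_W:=\|\delta^{-1}f\|_\infty<\infty\}$, so that hypothesis (ii) reads $\|f_j\|_W\le C$ for all $j$. Since $\delta$ is a homomorphism, conjugating a convolution operator by multiplication-by-$\delta$ only twists the kernel by $\delta^{-1}$, and a short computation gives $\|m(\mathcal L)\|_{W\to W}\lesssim\|k_m\|_{L^1}\lesssim\|m\|_{(s)}$. Thus the entire $C_c^\infty$-calculus of $\mathcal L$ acts boundedly on $W$, with operator norms controlled by Sobolev norms of the multiplier.

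Next I would carry out the localization. Elliptic regularity makes the $f_j$ smooth, and (i) gives $\mathcal L^n f_0=\beta^n f_n$ and $f_0=\beta^{-n}\mathcal L^n f_{-n}$ for $n\ge0$. Using that $\alpha>0$, the value $\beta$ lies strictly above the bottom $|\rho|^2$, hence in the interior of the spectrum, so both sides of $\beta$ are available. For $\eta\in C_c^\infty$ supported in $[|\rho|^2,\beta-\varepsilon]$, the backward relation gives $\eta(\mathcal L)f_0=\eta_n(\mathcal L)f_{-n}$ with $\eta_n(\lambda)=\eta(\lambda)(\lambda/\beta)^n$; since $\lambda/\beta<1$ on the support, $\eta_n\to0$ geometrically in every Sobolev norm, so $\|\eta(\mathcal L)f_0\|_W\lesssim\|\eta_n\|_{(s)}C\to0$, i.e. $\eta(\mathcal L)f_0=0$. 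Symmetrically, for $\eta\in C_c^\infty((\beta+\varepsilon,\infty))$ the forward relation gives $\eta(\mathcal L)f_0=\tilde\eta_n(\mathcal L)f_n$ with $\tilde\eta_n(\lambda)=\eta(\lambda)(\beta/\lambda)^n\to0$ geometrically, whence again $\eta(\mathcal L)f_0=0$. As these two families exhaust $C_c^\infty(\R\setminus\{\beta\})$, I obtain $\eta(\mathcal L)f_0=0$ for every $\eta\in C_c^\infty(\R\setminus\{\beta\})$. It is exactly here that the doubly-infinite nature of the sequence is used, through both $f_n$ and $f_{-n}$.

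Finally I would upgrade this localization to the eigenvalue equation. A tail estimate based on $\|\mathcal L^n f_0\|_W=\beta^n\|f_n\|_W\le C\beta^n$ shows the high-frequency part $(1-\chi)(\mathcal L)f_0$ vanishes for $\chi\in C_c^\infty$ equal to $1$ near $\beta$, so that $f_0=\chi(\mathcal L)f_0$ is localized near $\beta$. The delicate point is that, because $\beta$ sits in the continuous spectrum, localization at the point $\beta$ does not by itself force $(\mathcal L-\beta)f_0=0$: one must also exclude generalized (Jordan-type) components. This is where the two-sided uniform bound is indispensable: a nonzero component $w$ with $(\mathcal L-\beta)w\neq0$, $(\mathcal L-\beta)^2w=0$ would contribute a term of order $n\beta^{n}$ to $\mathcal L^n f_0$, contradicting $\|\mathcal L^n f_0\|_W\le C\beta^n$ for large $n$. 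Hence $\mathcal L f_0=\beta f_0=(\alpha^2+|\rho|^2)f_0$, which (up to the normalization of $\mathcal L$) is the assertion.

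I expect the main obstacle to be the boundedness-on-$W$ estimate of the first paragraph, together with the geometric decay of the multiplier norms $\|\eta_n\|_{(s)}$: this is precisely where the defining advantage of the distinguished Laplacian — the coincidence of its $L^p$- and $L^2$-spectra, which fails for $\Delta$ — is essential, since for $\Delta$ the analogous kernels are not integrable on the relevant weighted space and the geometric mechanism collapses (reflecting the known counterexamples for $\Delta$). The secondary difficulty is the Jordan-exclusion above, which must be made rigorous through the duality/weak-$\ast$ realization of the calculus on $W$ rather than an $L^2$ spectral decomposition of $f_0$, the latter being unavailable as $f_0$ is a non-$L^2$ generalized eigenfunction.
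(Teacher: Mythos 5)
Your plan takes a genuinely different route from the paper. The paper's proof is a short transference: setting $g_j=\delta^{1/2}\wtilde{f_j}$, the intertwining relation (\ref{relate-Laplacians}) turns the hypotheses into $\Delta_1 g_j=(\alpha^2+|\rho|^2)g_{j+1}$ and $|g_j|\le C\delta^{-1/2}$; since $\mathcal K(\delta^{-1/2})=\phi_0$ and $\int_X\phi_0(x)^2(1+\sigma(x))^{-M}dx<\infty$ for large $M$, each $g_j$ pairs against $C^2(X)$ with a uniform fixed seminorm bound, and Theorem \ref{Symm-dist-Delta1} (already established via the Abel transform and the Euclidean distributional theorem) finishes. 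Your proposal instead runs a functional-calculus localization directly on the weighted space $W$, and as written it has a genuine gap at its foundation. Since $\mathcal L$ is right-invariant, $m(\mathcal L)$ is a left convolution operator, and because $\delta$ is a homomorphism the twisted-kernel computation gives
\[
\|m(\mathcal L)f\|_W\;\le\;\|f\|_W\int_S|k_m(y)|\,\delta(y)^{-1}\,dy,
\]
i.e.\ what must be finite is the $L^1$ norm of $k_m$ against the \emph{exponentially growing} weight $\delta^{-1}$, not $\|k_m\|_{L^1}$. Your chain $\|m(\mathcal L)\|_{W\to W}\lesssim\|k_m\|_{L^1}\lesssim\|m\|_{(s)}$ fails at the first inequality, and the input you cite --- coincidence of $L^p$- and $L^2$-spectra, Mikhlin--H\"ormander/CGHM multiplier theory --- produces only unweighted $L^1$ or weak type $(1,1)$ kernel information, which cannot control $\int|k_m|\delta^{-1}$. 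The weighted bound is in fact true for $m\in C_c^\infty$: transferring by $f\mapsto\delta^{1/2}\wtilde f$ one finds $\int_S|k_m|\delta^{-1}$ is comparable to $\int_G\phi_0(x)|\kappa_m(x)|\,dx$, where $\kappa_m$ is the $K$-biinvariant kernel of $m(\Delta_1)$ on $G/K$; but to control this by a fixed norm of $m$ you need $\kappa_m\in C^2(G//K)$ with seminorms dominated by those of the multiplier, i.e.\ Anker's Schwartz-space isomorphism --- exactly the machinery the paper's proof rests on. So this step is repairable, but not by the route you indicate, and the repair essentially reproduces the paper's transference.

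The second gap is the endgame. From $\eta(\mathcal L)f_0=0$ for all $\eta\in C_c^\infty(\R\setminus\{\beta\})$ and $f_0=\chi(\mathcal L)f_0$ you cannot ``exclude Jordan components'' as described, because no decomposition of $f_0$ into an eigenvector plus generalized eigenvectors is available a priori: $f_0\notin L^2$ and $\beta$ sits in continuous spectrum, so there is no spectral subspace in which to decompose. The rigorous version --- the one used in Strichartz/Howard--Reese and in Theorem \ref{Euclid-dist} --- realizes the localization as the statement that the functional $m\mapsto\langle m(\mathcal L)f_0,\psi\rangle$ is a distribution in the spectral variable, supported at the single point $\beta$ and of \emph{finite order}, hence equal to $\sum_{k\le N}c_k(\psi)\,\delta_\beta^{(k)}$; only then does
\[
\langle\mathcal L^nf_0,\psi\rangle=c_0(\psi)\beta^n-n\,c_1(\psi)\beta^{n-1}+\cdots
\]
make sense, so that the two-sided bound $|\langle\mathcal L^nf_0,\psi\rangle|\le C_\psi\beta^n$ kills every $c_k$ with $k\ge1$. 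That finite-order realization again needs the uniform $W\to W$ calculus with a \emph{fixed} seminorm, i.e.\ the repaired first step; without it the contradiction cannot even be set up. Two smaller points: the bottom of the $L^2$- (and $L^p$-) spectrum of $\mathcal L$ is $0$, not $|\rho|^2$ (the spectrum is $[0,\infty)$) --- harmless here since $\beta=\alpha^2+|\rho|^2>0$ is still interior, but your stated justification is off; and you are right that the theorem's statement carries a normalization slip (hypothesis $(\alpha^2+|\rho|^2)f_{j+1}$ versus conclusion $\alpha f_0$), which the paper's own proof inherits.
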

Both of these results  may be viewed as  ``exact'' analogues of the Euclidean theorem. For Theorem \ref{symm-main}, one can argue that any reasonable analogue of the object $\phi_0$ on $\R^n$ is the constant function $1$,   while for Theorem \ref{thm-distinguished} one may recall that  for $\R^n$ (and  $\mathbb H^n$), $\delta\equiv 1$.

\section{Notation and Preliminaries}  For two positive functions $f_1$  and $f_2$  we shall write $f_1\asymp f_2$ to mean there are positive constants $C_1, C_2$ such that $C_1f_1\le f_2\le C_2 f_1$.  For a measurable function $f$ on $\R^n$ we define its Euclidean Fourier transform at $\lambda\in \C^n$ by
\[\wtilde{f}(\lambda)=\int_{\R^n} f(x)e^{-i\lambda\cdot x} dx,\] (where $\lambda\cdot x$ is the Euclidean (real) inner product of $\lambda$ and $x$), whenever the integral converges. Let $S(\R^n)$ be the Schwartz space on $\R^n$. Precisely $S(\R^n)$ is the set of functions in $C^\infty(\R^n)$ such that $\mu_{r,s}(f)<\infty$ for all multiindex $r$ and $s>0$ where
 \[\mu_{r,s}(f)=\sup_{x\in \R^n} (1+|x|)^s |D_rf(x)|.\] Here  $D_r=\frac{\partial^{r_1}}{\partial x_1}\ldots \frac{\partial^{r_n}}{\partial x_n}, r=(r_1,\dots,r_n)$ is a differential operator and $|x|$ is the Euclidean norm of $x$. The space $S(\R^n)$ becomes a  Frechet space with respect to the topology generated by the seminorms $\mu_{r,s}$.  The  dual space of $S(\R^n)$ is called the space of tempered distributions which will be denoted by $S(\R^n)'$. The following facts are well known: (1) $f\mapsto \wtilde{f}$ is an isomorphism from $S(\R^n)$ to itself, (2) using this isomorphism one can extend the notion of Fourier transform and derivative to $S(\R^n)'$, (3) $L^p$-functions for $1\le p\le \infty$ are tempered distributions.

The required  preliminaries and notation related to the noncompact semisimple Lie groups and the
associated symmetric spaces are standard and can be found for example
in \cite{GV, Helga-2, Helga-3}. To make the article self-contained  we shall gather only those  results which will be used throughout  this paper.
Let $G$ be a noncompact connected semisimple Lie group with finite centre and $K$ be a maximal compact subgroup of $G$. Let $X=G/K$ be the associated Riemannian symmetric space of noncompact type.  We let $G=KAN$ denote a fixed
Iwasawa decomposition of $G$. Let $\mathfrak g$, $\mathfrak k$,
$\mathfrak a$ and $\mathfrak n$ denote the Lie algebras of $G$,
$K$, $A$ and $N$ respectively. Let $\mathfrak g_\C$ be the complexificaion of $\mathfrak g$ and $\mathcal U(\mathfrak g_\C)$ be its universal enveloping algebra.
We recall that the elements of $\mathcal U(\mathfrak g_\C)$ are identified with the left-invariant differential operators on $G$ and there exists an anti-isomorphism $\imath$ from $\mathcal U(\mathfrak g_\C)$ to algebra of right-invariant  differential operators on $G$.
  We choose and keep fixed
throughout a system of positive restricted roots for the pair
$(\mathfrak g, \mathfrak a)$, which we denote by $\Sigma^+$.
The multiplicity of a root $\alpha\in \Sigma^+$ will
be denoted by $m_\alpha$. As usual the half-sum of the elements of
$\Sigma^+$ counted with their multiplicities will be denoted by
$\rho$. Let $H:G\lgra \mathfrak a$ be the Iwasawa projection
associated to the Iwasawa decomposition, $G= KAN$. Then $H$ is
left $K$-invariant and right $MN$-invariant where $M$ is the
centralizer of $A$ in $K$.  The Weyl group of the pair $(G, A)$
will be denoted by $W$. Let $\mathfrak a^*$ be the real dual of
$\mathfrak a$ and $\mathfrak a^*_\C$ its complexification. Let $\mathfrak a_+$ (respectively $\mathfrak a_+^\ast$) denote
the positive Weyl chamber in $\mathfrak a$ (respectively $\mathfrak a^\ast$).
Let $A^+=\exp {\mathfrak a}_+$  and $\overline{A^+}$ be the closure of $A^+$.

We recall that the Killing form $B$ restricted to $\mathfrak a$ is
a positive definite inner product on $\mathfrak a$ and it gives a
$W$-equivariant isomorphism of $\mathfrak a$ with $\mathfrak a^*$.
For $\lambda\in \mathfrak a^*$ we denote the corresponding element
in $\mathfrak a$ by $H_\lambda$. Let $\dim \mathfrak a=l$, i.e. $l$ is  the rank of $X$. We
will identify $\mathfrak a$ and $\mathfrak a^*$  with $\R^l$, as
an inner product space, with the inner product on $\R^l$ being the
pull-back of the Killing form. This inner product on $\R^l$ as
well as on $\mathfrak a$, $\mathfrak a^*$ will be referred to as
the Killing inner product and will be denoted by $\langle\, ,\,
\rangle$. The associated norm will be denoted by $|\cdot|$. We
hope that this symbol will not be confused with the absolute value
symbol. Since  $\exp:\mathfrak a\lgra A$ is an isomorphism, as a
group $A$ can be identified with $\R^l$.

For $x\in G$, we define $\sigma(x)=d(xK, K)$ where $d$ is the
canonical distance function for $X=G/K$ coming from the Riemann
metric on $X$ induced by the Killing  form restricted to
$\mathfrak p$. Here $\mathfrak g=\mathfrak k\oplus\mathfrak p$
(Cartan decomposition) and $\mathfrak p$ can be identified with
the tangent space at $eK$ of $G/K$. The function $\sigma(x)$ is
$K$-biinvariant and continuous. Note that for $x=k_1ak_2$ (polar decomposition),
$k_1,k_2\in K$, $a\in \overline{A^+}$, $\sigma(x)=\sigma(a)=|\log
a|$, the Killing norm of $\log a$, where $\log a$ is the unique
element in ${\mathfrak a}$ such that $\exp(\log a )=a$.

On $X$ we fix the measure $dx$  which is induced by the metric we
obtain from $B$. As the metric is $G$-invariant, so is $dx$.  On
$G$ we fix the Haar measure $dg$ satisfying
$$\int_Xf(x)dx=\int_gf(g)dg,$$ for every integrable function $f$
on $X$ which we also consider as a right $K$-invariant function on
$G$. While dealing with functions on $X$, we may slur over the
difference between the two measures.

Through the identification of $A$ with $\R^l$ we use the Lebesgue
measure on $\R^l$ as the  Haar measure $da$ on $A$. As usual on
the compact group $K$ we fix the normalized Haar measure $dk$
(i.e. vol$(K)=\int_K dk=1$). Finally we fix the Haar measure $dn$
on $N$ by the condition that
$$\int_G f(g)dg=\int_A\int_N\int_Kf(ank)\,dk\,dn\,da$$
holds for every integrable function $f$ on $G$.

Following integral formulae correspond to the Iwasawa and polar  decompositions respectively. For  any $f\in C_c^\infty(G)$,
\[\int_G f(x)dx=\int_K\int_{\mathfrak a}\int_N f(k\exp H n)e^{2\rho(H)}dn\, dH\, dk \text{ and}\]
\[\int_G f(x)dx=\int_K\int_K\int_{\mathfrak a^+} f(k_1\exp H k_2)J(H) dH\, dk_1\, dk_2,\]  where $dH$ is the Lebesgue measure of $\R^l$ with which $\mathfrak a$ is identified, $dn$, $dk$ are the normalized Haar measures of $K$,  $N$ respectively and
\[J(H)= C\prod_{\alpha\in \Sigma^+}(\sinh \alpha(H))^{m_\alpha}, H\in \mathfrak a^+,\]  $C$ being a normalizing constant.

For a measurable function $f$ on $X$ we define its $K$-invariant part $\mathcal K(f)(x)=\int_Kf(kx)dk$. We shall call $\mathcal K$ the $K$-averaging  operator. A function on $X$ is $K$-invariant if $f(kx)=f(x)$, for all $x\in X, k\in K$, equivalently $f(x)=\mathcal K(f)(x)$. We note that $\int_X \mathcal K(f)(x)g(x)dx =\int_X f(x) \mathcal K(g)(x) dx$ whenever the integrals on both sides converge. It follows that if $\mathcal K(f)=0$ and $\mathcal K (g)=g$ then $\int_Xf(x)g(x)dx=0$.

For $\lambda\in \mathfrak a^\ast_\C=\C^l$, the elementary spherical function $\phi_\lambda$ is given by \[\phi_\lambda(x)=\int_Ke^{-(i\lambda+\rho)H(x^{-1}k)}dk.\] It is a $K$-biinvariant eigenfunction of the Laplace-Beltrami operator  $\Delta$;   $\Delta\phi_\lambda=-(|\lambda|^2+|\rho|^2)\phi_\lambda$ and $\phi_\lambda=\phi_{w\lambda}$ for $w\in W$. In fact, $\phi_\lambda$ are particular examples of more general class of eigenfunctions called the Poisson transforms  $P_\lambda F$ defined in the introduction. Our main results will use certain size-estimates and characterizations of these Poisson transforms.
 In this regard, we quote the following  particular cases of a more general result proved in \cite[Proposition 3.4, Proposition 3.6]{BOS}.
\begin{theorem}[Ben Said et. al.]\label{Ben}
Let  $1\le p\le \infty$ be fixed  and $F\in L^p(K/M)$. Then
\[\sup_{x\in X}\phi_0(x)^{-1}\left[\int_K|P_\lambda F(kx)|^p dk\right]^{1/p}=\|F\|_{L^p(K/M)} \text{ when } 1\le p<\infty\text{ and }\]
\[\sup_{x\in X}\phi_0(x)^{-1}|P_\lambda F(x)| dk=\|F\|_{L^\infty(K/M)} \text{ when } p=\infty.\]
Moreover, suppose that  a function $f$ on $G/K$ satisfies $\Delta f=-|\rho|^2 f$ and $\sup_{x\in X}\phi_0(x)^{-1}\left[\int_K|f(kx)|^p dk\right]^{1/p}<\infty$ for some $1\le p< \infty$ or $\sup_{x\in X}\phi_0(x)^{-1}|f(x)|<\infty$, then there exists unique $F\in L^p(K/M)$ when $1<p\le \infty$ and a signed measure $\mu$ when $p=1$ such that $f=P_0F$ or $f=P_0\mu$.
\end{theorem}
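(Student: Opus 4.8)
The plan is to treat the two assertions separately: the norm identity, which comes from the sub-mean-value structure of the normalised Poisson kernel, and the converse characterisation, which is a Fatou-type boundary-value statement.

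For the norm identity I would begin with the upper bound. Since $\lambda\in\mathfrak a^\ast$ is real we have $|e^{-(i\lambda+\rho)H(x^{-1}k)}|=e^{-\rho(H(x^{-1}k))}$, and because $\int_K e^{-\rho(H(x^{-1}k))}\,dk=\phi_0(x)$ the measure $d\nu_x(k)=\phi_0(x)^{-1}e^{-\rho(H(x^{-1}k))}\,dk$ is, for each fixed $x$, a probability measure on $K/M$. Writing $P_\lambda F(x)=\phi_0(x)\int_{K/M}e^{-i\lambda(H(x^{-1}k))}F(k)\,d\nu_x(k)$, the case $p=\infty$ is immediate, and for $1\le p<\infty$ Jensen's inequality gives $|P_\lambda F(x)|^p\le\phi_0(x)^{p-1}\int_{K/M}|F(k)|^pe^{-\rho(H(x^{-1}k))}\,dk$; integrating $|P_\lambda F(kx)|^p$ over $k\in K$ and using the left $K$-invariance of $\phi_0$, Fubini, and $\int_K e^{-\rho(H(x^{-1}k^{-1}k'))}\,dk=\phi_0(x)$, the right-hand side collapses to $\phi_0(x)^p\|F\|_{L^p(K/M)}^p$, giving ``$\le$''. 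For the reverse inequality I would let $a\in A^+$ tend to infinity along a regular direction, so that $\nu_a$ concentrates at the base point of $K/M$ (the normalised Poisson kernel is an approximate identity at the boundary). For continuous $F$ one checks $|\phi_0(a)^{-1}P_\lambda F(ka)|\to|F(kM)|$ pointwise in $k$, whence dominated convergence yields $\phi_0(a)^{-p}\int_K|P_\lambda F(ka)|^p\,dk\to\|F\|_{L^p(K/M)}^p$; a density argument extends this to all $F$, and $p=\infty$ follows by letting the concentration point run over $K$. This gives equality, and in particular shows $P_0$ is injective on $L^p(K/M)$ for $1<p\le\infty$ and on signed measures for $p=1$, which will yield the uniqueness in the theorem.

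For the characterisation one has $\lambda=0$, so the oscillating phase disappears and the concentration computation becomes $\phi_0(a)^{-1}P_0F(ka)\to F(kM)$; thus the boundary-value map $\mathcal B\colon f\mapsto\lim_{a\to\infty}\phi_0(a)^{-1}f(\cdot\,a)$ satisfies $\mathcal B\circ P_0=\mathrm{id}$. Given $f$ with $\Delta f=-|\rho|^2f$ and finite Hardy norm, the normalised dilates $F_a(k)=\phi_0(a)^{-1}f(ka)$ are uniformly bounded in $L^p(K)$ by hypothesis, so by reflexivity for $1<p<\infty$, by weak-$\ast$ compactness of the unit ball of $L^\infty$ for $p=\infty$, and of bounded measures for $p=1$, some $a_n\to\infty$ gives $F_{a_n}\rightharpoonup F$ (resp.\ $\mu$). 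It then remains to identify $f=P_0F$ (resp.\ $P_0\mu$): since $P_0F$ is an eigenfunction for the same eigenvalue with boundary data $\mathcal B(P_0F)=F=\mathcal Bf$ and comparable growth, the difference $f-P_0F$ is an eigenfunction with vanishing boundary data and controlled size, which should be forced to vanish.

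The main obstacle is exactly this last identification, a Fatou-type theorem: one must show that the full (not merely subsequential) weak limit of $\phi_0(a)^{-1}f(\cdot\,a)$ exists and that the single scalar equation $\Delta f=-|\rho|^2f$, under the Hardy-norm bound, rigidly pins $f$ to the Poisson integral of that limit---equivalently that the only such eigenfunction with zero boundary data is $0$. This requires the sharp Harish-Chandra asymptotics of $\phi_0$ and of the Poisson kernel used above, together with, in higher rank where $\Delta$ alone does not generate all invariant operators, a spectral or asymptotic-expansion argument showing that the bottom eigenvalue $-|\rho|^2$ is as rigid as in the rank-one case.
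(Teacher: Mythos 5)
The first thing to note is that the paper contains no proof of this statement: it is quoted verbatim from Ben Sa\"{i}d--Oshima--Shimeno \cite[Propositions 3.4, 3.6]{BOS}, so your attempt has to be measured against the proof in \cite{BOS}, not against anything in this paper. Your upper bound (``$\le$'') is correct and is essentially the standard argument: Jensen's inequality against the probability measure $d\nu_x(k)=\phi_0(x)^{-1}e^{-\rho(H(x^{-1}k))}dk$, followed by the Fubini/invariance identity. The serious gap is in your lower bound. The claim that $|\phi_0(a)^{-1}P_\lambda F(ka)|\to|F(kM)|$ as $a\to\infty$ regularly is \emph{false} when $\lambda\neq 0$: the measure $\nu_{ka}$ does concentrate at $kM$, but only logarithmically --- already in rank one, each dyadic annulus of radius $2^{-j}$ around $kM$, $0\le j\lesssim t=|\log a|$, carries a comparable share of the total mass --- and across this region the Iwasawa projection $H(a^{-1}k^{-1}k')$ sweeps an interval of length about $2t$. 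Consequently $\phi_0(a)^{-1}P_\lambda F(ka)$ behaves roughly like $F(kM)\,\frac{\sin(|\lambda| t)}{|\lambda| t}\to 0$, not like $|F(kM)|$. The same conclusion follows from the Harish-Chandra expansion: for $\lambda$ real regular one has $|P_\lambda F(ka)|=O\bigl(e^{-\rho(\log a)}\bigr)$, while $\phi_0(a)\asymp e^{-\rho(\log a)}\prod_{\alpha\in\Sigma^+}\bigl(1+\langle\alpha,\log a\rangle\bigr)$, so the ratio decays polynomially along every regular ray. Thus boundary concentration can never produce the equality for $\lambda\neq0$; your argument is genuinely valid only at $\lambda=0$, where the oscillating factor disappears (and note the host paper only ever uses the inequality ``$\le$'' when $\lambda\neq 0$). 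Even at $\lambda=0$, the convergence $\phi_0(a)^{-1}P_0F(ka)\to F(kM)$ is not an ``approximate identity'' triviality but a Fatou-type theorem (Michelson; Section 2 of \cite{BOS}) whose proof requires the asymptotics of $\phi_0$ and of the normalized kernel, precisely because the concentration is only logarithmic.

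Second, the part of the theorem that the host paper actually relies on --- the converse characterization of solutions of $\Delta f=-|\rho|^2 f$ with finite Hardy norm as $P_0F$ or $P_0\mu$ --- is exactly the part you do not prove. Your weak-$\ast$ compactness setup (extracting a boundary datum from $F_a=\phi_0(a)^{-1}f(\cdot\,a)$) is a reasonable first move, but the identification $f=P_0F$ --- equivalently, the rigidity statement that an eigenfunction with eigenvalue $-|\rho|^2$, Hardy-bounded, and with vanishing boundary data must vanish --- is the entire content of \cite[Proposition 3.6]{BOS}, and you explicitly defer it as ``the main obstacle.'' The higher-rank subtlety you gesture at is real and cannot be waved away: $\Delta f=-|\rho|^2 f$ does not force $f$ to be a joint eigenfunction of all invariant differential operators (every $\phi_\mu$ with $\mu\in\mathfrak a^\ast_\C$ isotropic, $\langle\mu,\mu\rangle=0$, satisfies the same single equation), so the functional-equation and spherical-averaging shortcuts available for joint eigenfunctions do not apply, and a substitute argument is indispensable. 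As it stands, your proposal establishes the easy inequality, reduces the $\lambda=0$ equality to a quoted Fatou theorem, attempts the $\lambda\neq0$ equality by a route that provably fails, and omits the ``moreover'' part, which is the theorem's core.
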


For a suitable function $f$ on $G/K$ its spherical Fourier transform $\what{f}(\lambda)$ for $\lambda\in \C^l$ is defined by \[\what{f}(\lambda)=\int_Gf(x)\phi_\lambda(x)dx.\] It is then clear that $\what{f}(\lambda)=\what{f}(w\lambda)$ for all $w\in W$.

We now need to introduce the notion of Schwartz spaces and distributions on $X$.
The $L^2$-Schwartz space $C^2(G)$ is defined as the set of all $C^\infty$-functions on $G$ such that
$$\gamma_{r,g_1, g_2}(f)=\sup_{x\in G}|(\imath g_1)g_2\,f(x)|
\phi_0(x)^{-1}(1+\sigma(x))^r<\infty,$$ for all nonnegative integers $r$
and $g_1, g_2\in \mathcal U(\mathfrak g_\C)$. Let
$C^2(G//K)$ be the set of $K$-biinvariant  functions in $C^2(G)$. We recall that (\cite{Ank1}) the spherical Fourier transform is an isomorphism  from $C^2(G//K)$ to $S(\R^l)_W$, where $S(\R^l)_W$ is the subspace of W-invariant functions in $S(\R^l)$.

The dual space of $C^2(G/K)$ will be denoted by $C^2(G/K)'$ and its elements will be called $L^2$-tempered distributions.  The translation of $T\in C^2(G/K)'$ by an element $y\in G$ and its convolution with a function $g\in C^2(G//K)$ are  defined in the usual way by $(\ell_y T)f=T(\ell_{y^-1} f)$ and  $T\ast g(y)=(\ell_yT)(g)$, where $\ell_yf(x)=f(y^{-1}x)$. An $L^2$-tempered distribution $T\in C^2(G/K)'$ is called $K$-invariant if
$\langle T, \psi\rangle=\langle T, \mathcal K(\psi)\rangle$, $\psi\in C^2(G/K)$. The set of $K$-invariant $L^2$-tempered distributions on $G/K$ will be denoted by $C^2(G//K)'$.
The heat kernel $h_t$ for $t>0$ is a $K$-invariant function in $C^2(G/K)$ which is defined using the isomorphism of $C^2(G//K)$ with $S(\R^l)_W$, prescribing its spherical Fourier transform   $\what{h_t}(\lambda)=e^{-t(|\lambda|^2+|\rho|^2)}$. It is well known that $h_t\in C^2(G//K)$. Using \cite[Theorem 4.1.1]{egu79} it can be verified that  $f\ast h_t\to f$ in $C^2(G/K)$ as $t\to 0$ and hence $T\ast h_t\to T$ as $t\to 0$ in the sense of distribution.

Finally we need the notion of Abel transform.
For a  $K$-invariant function $f$ on $G/K$ its Abel transform $\Ab f$ is defined by:
\[\Ab f(a)=e^{\rho(\log a)} \int_N f(an)dn,\text { for } a\in A,\] whenever the integral makes sense.   We recall that the {\em slice projection} theorem (\cite{Ank1}) states that for any $f\in C^2(G//K)$, $\lambda\in \mathfrak a^\ast\equiv \R^l$, we have the identity \[\wtilde{\Ab f}(\lambda)=\what{f}(\lambda).\]
The following theorem proved in \cite{Ank1} will be crucial for this paper.
\begin{theorem}
The Abel transform $\Ab: C^2(G//K)\to S(\R^l)_W$ is a topological isomorphism
\end{theorem}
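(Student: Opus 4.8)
The plan is to realize $\Ab$ as a composition of two maps already known to be topological isomorphisms, using the slice projection theorem as the bridge. Write $\mathcal F$ for the Euclidean Fourier transform $f\mapsto\wtilde f$ and $\mathcal H$ for the spherical Fourier transform $f\mapsto\what f$. I would first record the two facts that carry no new work here: (a) by the result recalled above from \cite{Ank1}, $\mathcal H$ is a topological isomorphism of $C^2(G//K)$ onto $S(\R^l)_W$; and (b) $\mathcal F$ is a topological isomorphism of $S(\R^l)$ onto itself, and since $W$ acts on $\R^l\cong\mathfrak a^\ast$ by orthogonal transformations for the Killing inner product, $\mathcal F$ commutes with the $W$-action, so both $\mathcal F$ and $\mathcal F^{-1}$ restrict to topological isomorphisms of the closed subspace $S(\R^l)_W$ onto itself.

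Next I would invoke the slice projection identity $\wtilde{\Ab f}(\lambda)=\what f(\lambda)$, valid for every $f\in C^2(G//K)$. Read as an equality in $S(\R^l)_W$, it says precisely that $\mathcal F(\Ab f)=\mathcal H(f)$, whence $\Ab=\mathcal F^{-1}\circ\mathcal H$ as maps from $C^2(G//K)$ into $S(\R^l)_W$. Indeed, for $f\in C^2(G//K)$ the function $\what f$ lies in $S(\R^l)_W$ by (a), and applying $\mathcal F^{-1}$, which preserves $S(\R^l)_W$ by (b), yields $\Ab f=\mathcal F^{-1}(\what f)\in S(\R^l)_W$. Thus $\Ab$ is literally the composition of the two isomorphisms in (a) and (b), hence is itself a topological isomorphism.

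For completeness I would spell out bijectivity by the obvious diagram chase: if $\Ab f=0$ then $\what f=\wtilde{\Ab f}=0$, so $f=0$ by injectivity of $\mathcal H$; and given $g\in S(\R^l)_W$ one has $\wtilde g\in S(\R^l)_W$, so surjectivity of $\mathcal H$ furnishes $f\in C^2(G//K)$ with $\what f=\wtilde g$, and then $\wtilde{\Ab f}=\what f=\wtilde g$ forces $\Ab f=g$. Continuity of $\Ab$ and of $\Ab^{-1}$ is inherited from that of the two component isomorphisms and their inverses, establishing the claim.

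I would stress that the substantive content of the statement is entirely carried by the two inputs — the Schwartz-space image theorem for the spherical transform and the slice projection identity — both of which are the deep results of \cite{Ank1}; once these are granted, the remaining argument is the elementary factorization above. Accordingly the main obstacle is not in the present deduction but in the harmonic-analytic estimates underlying those cited facts, in particular the matching of seminorms that pins the range of $\mathcal H$ to be \emph{exactly} $S(\R^l)_W$ rather than merely a subspace of it, which is what makes the inverse continuous.
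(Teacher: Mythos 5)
Your proposal is correct, but note a point of reference: the paper does not prove this statement at all --- it simply quotes it as a theorem established in \cite{Ank1}, so there is no internal proof to compare against. What you have done is show that, given the two other facts the paper recalls from the same reference (the Schwartz-space isomorphism theorem for the spherical transform $\mathcal H: C^2(G//K)\to S(\R^l)_W$, and the slice projection identity $\wtilde{\Ab f}=\what{f}$), the Abel-transform isomorphism follows by the factorization $\Ab=\mathcal F^{-1}\circ\mathcal H$. That deduction is sound: $S(\R^l)_W$ is closed in $S(\R^l)$ (it is the intersection of the kernels of the continuous maps $f\mapsto wf-f$), the Euclidean Fourier transform commutes with the orthogonal $W$-action and so restricts to an isomorphism of $S(\R^l)_W$, and a composition of topological isomorphisms is one. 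The only step worth making explicit is the passage from $\mathcal F(\Ab f)=\what f$ to $\Ab f=\mathcal F^{-1}(\what f)$: this uses injectivity of $\mathcal F$ on tempered distributions, so you need $\Ab f$ to be tempered (e.g.\ integrable), which is implicit in the slice projection theorem being asserted with a pointwise-defined Euclidean Fourier transform of $\Ab f$ at real $\lambda$. You are also right, and it is the honest caveat here, that your argument carries no independent content beyond the two cited inputs: in \cite{Ank1} the Abel-transform isomorphism and the spherical-transform isomorphism are essentially equivalent formulations related by exactly this Fourier-transform conjugation, so your proof is best read as a verification that the paper's three quoted facts are mutually consistent, with all the analytic weight residing in Anker's seminorm estimates.
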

The use of Abel transform in our proof is somewhat similar to that of \cite{Bag-Sita} (see also \cite{Helga-Abel}).

\section{Roe-Strichartz theorem for  Laplace-Beltrami operator}
\subsection{Distribution-version of the Euclidean theorem}
  Since   the real rank of $G$ is $l$, it follows that $W$ is a subgroup of $O(l)$ as $W$ preserves the inner product induced by the Killing form.  For $T\in S(\R^l))'$ and $w\in W$ we define
  \[(wT)f= T(wf)\text{ for all } f\in S(\R^l), \text{ where } wf(x)=f(wx).\]
  The $W$-invariant component $f^\#$ of $f\in S(\R^l)$ (respectively $T^\#$ of   $T\in S(\R^l)'$) is defined  as
  \[f^\#=\frac1{|W|}\sum_{w\in W} wf, (\text{respectively } T^\#=\frac1{|W|}\sum_{w\in W} wT) \text{ where } |W| \text{ denotes the cardinality of } W. \]
  A tempered distribution $T$ is called $W$-invariant if $T^\#=T$. It is easy to verify that for $T\in S(\R^l)'$ and $f\in S(\R^l)$, $T^\# f=Tf^\#$ and in particular when  $T$ is  $W$-invariant $Tf=Tf^\#$. It is also not difficult to see that the Laplacian $L$ of $\R^l$ commutes with $W$-action  and hence  if $f\in S(\R^l)_W$ (respectively $T\in S(\R^l)_W'$)
then $L f\in S(\R^l)_W$ (respectively $L T\in S(\R^l)_W'$).

We shall first prove the following version of the Euclidean theorem. Below $L_1=L-|\rho|^2$.
\begin{theorem}\label{Euclid-dist} Let
$\{T_j\}$ be  a doubly infinite sequence of $W$-invariant tempered
distributions  on $\R^l$ such that

$(i)$ $L_1 T_j=zT_{j+1}$ for some $z\in \C$, $|z|\ge |\rho|^2$.

$(ii)$ for all $\psi\in S(\R^l)_{W}$, $|\langle T_j, \psi\rangle|\le
M\mu(\psi)$ for some fixed seminorm $\mu$ of $S(\R^l)$ and $M>0$.

Then $L_1 T_0=-|z| T_0$.
\end{theorem}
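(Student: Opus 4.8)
The plan is to pass to the Fourier-transform side, where the Laplacian becomes multiplication by $-|\la|^2$ and the whole problem reduces to a statement about the support and order of the single distribution $\wtilde{T_0}$. Writing $S_j=\wtilde{T_j}$ for the Euclidean Fourier transform (so each $S_j$ is again a $W$-invariant tempered distribution), condition $(i)$ becomes $p(\la)\,S_j=z\,S_{j+1}$, where $p(\la)=-(|\la|^2+|\rho|^2)$ is the Fourier multiplier of $L_1=L-|\rho|^2$. Since the Fourier transform is a topological automorphism of $S(\R^l)$ carrying $S(\R^l)_W$ onto itself, condition $(ii)$ becomes $|\langle S_j,\phi\rangle|\le M\nu(\phi)$ for all $\phi\in S(\R^l)_W$, with $\nu(\phi)=\mu(\wtilde\phi)$ a fixed continuous seminorm. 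The desired conclusion $L_1T_0=-|z|T_0$ is equivalent, after transforming, to $q(\la)\,S_0=0$, where $q(\la)=|\la|^2+|\rho|^2-|z|=-(p(\la)+|z|)$.

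Since $p/z$ is a scalar multiple of a polynomial, multiplication by it is legitimate on distributions, and the recursion yields $S_j=(p/z)^jS_0$ for $j\ge0$ and $S_0=(p/z)^kS_{-k}$ for $k\ge0$. I would first localize the support of $S_0$ to the sphere $\Si_z=\{|\la|^2+|\rho|^2=|z|\}$ by the usual Roe--Strichartz dichotomy. On the open set $\{|p|>|z|\}$, for $\phi\in S(\R^l)_W$ compactly supported there, the functions $\psi_j=(z/p)^j\phi$ are genuine $W$-invariant test functions (their support avoids the zeros of $p$, and $|z/p|<1$ there) tending to $0$ in $S(\R^l)$; from the identity $\langle S_0,\phi\rangle=\langle S_j,\psi_j\rangle$ together with $(ii)$ one gets $\langle S_0,\phi\rangle=0$. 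On the open set $\{|p|<|z|\}$, for $\phi$ supported there, $(p/z)^k\phi\to0$ in $S(\R^l)$ and $\langle S_0,\phi\rangle=\langle S_{-k},(p/z)^k\phi\rangle\to0$. Hence $\mathrm{supp}\,S_0\subseteq\Si_z$; as $\Si_z$ is compact and $W$-invariant, $S_0$ has finite order $N$.

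The crucial step is to improve ``supported on $\Si_z$'' to ``annihilated by $q$''. Factoring $p/z=-\omega\,(1+q/|z|)$ with $\omega=|z|/z$ unimodular, the bound $(ii)$ rewrites as the uniform estimate $|\langle S_0,(1+q/|z|)^j\phi\rangle|\le M\nu(\phi)$ for all $j\ge0$ and all $\phi\in S(\R^l)_W$. Now $q/|z|$ is a $W$-invariant defining function of $\Si_z$ with nonvanishing differential along $\Si_z$ (when $|z|>|\rho|^2$), so in adapted $W$-invariant coordinates $(t,y)$ with $t=q/|z|$ a distribution of order $N$ carried by $\{t=0\}$ pairs with $(1+t)^j\phi$ to produce a contribution growing like $j^N$ as $j\to\iy$ (coming from $\partial_t^N(1+t)^j$ at $t=0$) unless $N=0$. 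Uniform boundedness therefore forces $N=0$, i.e. $S_0$ is a Radon measure carried by $\Si_z$; since $q\equiv0$ on $\Si_z$, this gives $qS_0=0$. (When $|z|=|\rho|^2$ the sphere degenerates to $\{0\}$ and $q=|\la|^2$ vanishes there to order $2$; the same growth estimate then forces order $\le1$, which already yields $qS_0=0$ by a direct computation using the $W$-invariance of $S_0$.) Finally $qS_0=0$ means $p\,S_0=-|z|\,S_0$, and transforming back gives $L_1T_0=-|z|T_0$.

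I expect the genuine obstacle to be exactly this last improvement — the passage from ``supported on the sphere'' to the precise annihilation $qS_0=0$. The support localization is the routine Strichartz dichotomy, whereas ruling out transverse-derivative components of $S_0$ requires using the uniform bound quantitatively through the $j^N$ growth of $(1+q/|z|)^j$, and some care will be needed both with the $W$-invariant adapted coordinates and with the degenerate eigenvalue $|z|=|\rho|^2$.
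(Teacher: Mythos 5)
Your architecture is the same as the paper's, which in turn follows Strichartz and Howard--Reese: pass to the Fourier side, localize $\mathrm{supp}\,S_0$ (where $S_0=\wtilde{T_0}$) to the sphere $\Si_z$ by letting $j\to+\infty$ where $|p|>|z|$ and $j\to-\infty$ where $|p|<|z|$, and then use the uniform bound to upgrade ``carried by $\Si_z$'' to $qS_0=0$. Your localization step is correct (it is the part the paper writes out in detail), and you correctly identified the upgrade as the real issue. But the upgrade as you perform it is flawed, because the $j$-growth argument detects only \emph{transverse} derivatives, not the total order of $S_0$: tangential derivatives produce no growth at all. Consequently both of your order-counting claims are false as stated. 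Against ``uniform boundedness forces $N=0$, i.e.\ $S_0$ is a Radon measure'': for $l\ge 2$ take $y_0\in\Si_z$ with trivial $W$-stabilizer, a vector $V$ tangent to $\Si_z$ at $y_0$, and set $\langle S_0,\phi\rangle=\sum_{w\in W}\bigl((dw\,V)\phi\bigr)(wy_0)$. This is a $W$-invariant distribution of order $1$ carried by $\Si_z$; since each $dw\,V$ is tangent to the level set $\{q=0\}$, one gets $(dw\,V)(q\phi)(wy_0)=0$, so $qS_0=0$, and hence $S_j=(-\omega)^jS_0$ satisfies your recursion and your uniform bound (and the desired conclusion) --- yet $S_0$ is not a measure. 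Against the degenerate claim ``order $\le1$'': the theorem does not assume $\mathfrak g$ simple, so $W$ may act reducibly; for $l=2$ and $W=\Z_2\times\Z_2$ acting by sign changes, $S_0=(\partial_1^2-\partial_2^2)\delta$ is $W$-invariant, of order $2$, supported at the origin, and satisfies $|\la|^2S_0=0$, hence all hypotheses. So boundedness forces neither ``measure'' nor ``order $\le1$''; what it forces is exactly the vanishing of the transverse components, i.e.\ of $q^kS_0$ for $k\ge1$.

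Fortunately that is all you need, and the growth argument, run correctly, gives precisely it; this is the route the paper takes by citing Howard--Reese, pp.~210--211. Since $S_0$ is compactly supported it has finite order $N$, and support in $\{q=0\}$ (a compact hypersurface with $dq\neq0$ when $|z|>|\rho|^2$; the origin, where $q$ vanishes to second order, when $|z|=|\rho|^2$) yields the polynomial identity $q^{N+1}S_0=0$. Then for every test function $\phi$,
\[
\bigl\langle S_0,(1+q/|z|)^j\phi\bigr\rangle=\sum_{k=0}^{N}\binom{j}{k}\,\bigl\langle (q/|z|)^kS_0,\phi\bigr\rangle,
\]
a polynomial in $j$ of degree at most $N$. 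The polynomials $j\mapsto\binom{j}{k}$, $0\le k\le N$, are linearly independent, and a polynomial bounded on $\N$ is constant; your (correct) reformulation of hypothesis $(ii)$ therefore forces $\langle q^kS_0,\phi\rangle=0$ for all $k\ge1$ and all $\phi$, i.e.\ $qS_0=0$, which transforms back to $L_1T_0=-|z|T_0$. This formulation needs no structure theorem and no adapted ($W$-equivariant) tubular coordinates, and it treats $|z|>|\rho|^2$ and $|z|=|\rho|^2$ uniformly; if you want to keep your coordinate picture, the correct statement to extract from it is that the top \emph{transverse} index $k$ with $u_k\neq0$ in the local decomposition $S_0=\sum_k u_k\otimes\delta^{(k)}(t)$ must be $0$, with no constraint on the order of $u_0$ along $\Si_z$.
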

This theorem  is essentially proved in \cite{Str, Howd-Reese}.
For the sake of completeness, we include here only a very brief sketch of the argument.
\begin{proof}
Since $|z|\ge |\rho|^2$, we write $z=(\alpha^2+|\rho|^2)e^{i\theta}, \alpha\ge 0, \theta\in \R$. For a $T\in S(\R^l)'$, by $\wtilde{T}$ we denote its Euclidean Fourier transform.
For $T_0$ to be an eigendistribution of $L_1$ with eigenvalue $-|z|$, it is necessary that  the distribution $\wtilde{T_0}$ is supported on the sphere $\{x\in \R^l \mid |x|=\alpha^2+|\rho|^2\}$. First we shall  prove that.  Then following exactly the steps of \cite[p.210]{Howd-Reese} one  can show that there exists $N\ge 0$ such that $(L_1+(\alpha^2+|\rho|^2))^{N+1}\wtilde{T_0}=0$, which will finally lead to $(L_1+(\alpha^2+|\rho|^2))\wtilde{T_0}=0$ (see \cite[p. 210--211]{Howd-Reese} for details).

   It follows from the  hypothesis that
\[\wtilde{T_0}=(-1)^j\, e^{ij\theta}\left(\frac{\alpha^2+|\rho|^2}{|x|^2+|\rho|^2}\right)^j\wtilde{T_j},\] where $x$ is a dummy variable.
We take a function  $\phi\in S(\R^l)$ such that  $\mathrm{Support}\,\phi\subseteq\{x\in \R^l \mid |x|\ge \alpha+\varepsilon\}$, for some $\varepsilon>0$. Hence $\mathrm{Support}\, \phi^\#\subseteq \{x\in \R^l \mid |x|\ge \alpha+\varepsilon\}$. Let $\psi$ be the Euclidean Fourier transform of the function $(\alpha^2+|\rho|^2)^j(|x|^2+|\rho|^2)^{-j}\phi$. Then
\begin{equation}|\langle\wtilde{T_0}, \phi\rangle|=|\langle T_j, \psi\rangle|=|\langle T_j,\psi^\#\rangle| \le M\mu(\psi^\#)\le M\gamma_{\beta, \tau}\left[\left(\frac{\alpha^2+|\rho|^2}{|x|^2+|\rho|^2}\right)^j\phi^\#\right]
\label{euclidean-2}
\end{equation}
where $\gamma_{\beta, \tau}(f)=\sup_{x\in \R^l}(1+|x|)^\beta|D^\tau f(x)|$ for some positive integer $\beta$ and multi index $\tau$. It follows from the fact that $|x|\ge \alpha+\varepsilon$ on the support of $\phi$, that the right hand side of \eqref{euclidean-2} goes to zero as $j\to \infty$.
 A similar argument taking $j\to -\infty$ will show that $\langle\wtilde{T_0}, \phi\rangle=0$ for all $\phi\in S(\R^l)$ with  support of $\phi\subseteq\{x\in \R^l \mid |x|\le \alpha-\varepsilon\}$.  This proves that distributional support of $\wtilde{T_0}$ is contained in the sphere $\{x\in \R^l \mid |x|=\alpha\}$.
\end{proof}
\subsection{Distribution-version of the  theorem for the  symmetric spaces}
First we shall prove a version of the Roe-Strichratz theorem for $K$-invariant tempered distributions and then we shall generalize the result for arbitrary tempered distributions.
\begin{theorem} \label{Symm-dist-radial}
 If for a doubly infinite sequence
$\{T_j\}$ of $K$-invariant $L^2$-tempered distributions on $X$, $\Delta T_j=
zT_{j+1}$ for some $z\in \C$ with $|z|\ge |\rho|^2$ and for a fixed
seminorm $\nu$ of $C^2(X)$, $|\langle T_j, \phi\rangle|\le
M\nu(\phi)$ for some $M>0$ for all $\phi\in C^2(G//K)$, then $\Delta
T_0=-|z| T_0$.
\end{theorem}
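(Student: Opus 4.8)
**The plan is to transport the problem to the Euclidean setting via the Abel transform, where Theorem \ref{Euclid-dist} applies, and then transport the conclusion back.**

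The key observation is that the machinery of Section 2 gives us a dictionary between $K$-invariant objects on $X$ and $W$-invariant objects on $\R^l$. Specifically, the Abel transform $\Ab : C^2(G//K) \to S(\R^l)_W$ is a topological isomorphism, and the spherical Fourier transform satisfies $\wtilde{\Ab f}(\lambda) = \what f(\lambda)$. The first step would be to dualize this isomorphism. Given a $K$-invariant $L^2$-tempered distribution $T_j \in C^2(G//K)'$, I would define a $W$-invariant tempered distribution $S_j$ on $\R^l$ by $\langle S_j, \psi \rangle = \langle T_j, \Ab^{-1}\psi \rangle$ for $\psi \in S(\R^l)_W$ (extending by $W$-invariance to all of $S(\R^l)$ via $\psi \mapsto \psi^\#$). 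Because $\Ab$ is a topological isomorphism, it pulls back each seminorm of $C^2(X)$ to a continuous seminorm on $S(\R^l)$; hence the bound $|\langle T_j, \phi\rangle| \le M\nu(\phi)$ transfers to a bound $|\langle S_j, \psi\rangle| \le M'\mu(\psi)$ for a fixed seminorm $\mu$ of $S(\R^l)$, which is exactly hypothesis (ii) of Theorem \ref{Euclid-dist}.

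The second step is to verify the intertwining relation. The Abel transform is designed to conjugate the Laplace--Beltrami operator $\Delta$ into a shifted Euclidean Laplacian: on the Fourier side, $\Delta$ acts by multiplication by $-(|\lambda|^2 + |\rho|^2)$, while $L = L_1 + |\rho|^2$ acts by $-|\lambda|^2$, so $\Ab$ intertwines $\Delta$ with $L_1 = L - |\rho|^2$ (up to the sign convention $\Delta \leftrightarrow L_1$). I would make this precise using the slice-projection identity, concluding that $\Delta T_j = z T_{j+1}$ pushes forward to $L_1 S_j = z S_{j+1}$. With both hypotheses of Theorem \ref{Euclid-dist} now verified for $\{S_j\}$, that theorem yields $L_1 S_0 = -|z| S_0$.

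Finally I would transport this eigen-equation back. Since $\Ab$ intertwines $\Delta$ and $L_1$ and is a topological isomorphism, the relation $L_1 S_0 = -|z| S_0$ on the Euclidean side is equivalent to $\Delta T_0 = -|z| T_0$ on $X$, which is the desired conclusion. \textbf{The main obstacle I anticipate} is the care needed in setting up the duality rigorously: one must check that $\Ab^{-1}$ maps $S(\R^l)_W$ into $C^2(G//K)$ continuously (so that $\langle T_j, \Ab^{-1}\psi\rangle$ makes sense and is bounded by $\nu(\Ab^{-1}\psi)$), and that the differential-operator intertwining, which is clean on Schwartz functions via the Fourier/slice-projection identity, dualizes correctly to distributions. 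Testing against the heat kernel approximation $T \ast h_t \to T$ from Section 2 may be the cleanest way to reduce distributional identities to the already-established smooth-function identities, thereby avoiding any direct manipulation of the operators $\imath g_1, g_2$ on the distribution side.
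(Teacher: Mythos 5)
Your proposal is correct and follows essentially the same route as the paper: the paper also dualizes the topological isomorphism $\Ab : C^2(G//K)\to S(\R^l)_W$ (your $S_j$ is exactly the paper's $BT_j=(\Ab^\ast)^{-1}T_j$), proves the intertwining $L_1\Ab g=\Ab\Delta g$ via the slice-projection theorem and pushes it to distributions, transfers the seminorm bound using the continuity of $\Ab^{-1}$, applies Theorem \ref{Euclid-dist}, and pulls the eigen-equation back by injectivity of $B$. The only cosmetic difference is your closing suggestion to use the heat-kernel approximation here, which the paper reserves for the non-$K$-invariant case (Theorem \ref{Symm-dist}) and does not need in this radial step.
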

\begin{proof}

 Since  $\Ab: C^2(G//K)\rightarrow
S(\R^l)_{W}$ is an isomorphism, its adjoint  $\Ab^\ast:
S(\R^l)'_{W} \to C^2(G//K)'$ and $B=(\Ab^\ast)^{-1}:
C^2(G//K)'\rightarrow S(\R^l)'_{W}$ are isomorphisms (see \cite[p. 541]{Helga-abel}).

We claim that for $T\in C^2(G//K)'$, $B(\Delta T)=L_1BT$. We  note that    for any $g\in C^2(G//K)$, $L_1\Ab g=\Ab \Delta g$.
Indeed  by the slice-projection theorem (see section 2) $\wtilde{\Ab g}(\lambda)=\what{g}(\lambda)$  for any $\lambda\in \mathfrak a^\ast$.  Therefore the Euclidean Fourier transform of $L_1 \Ab g$ is $-(|\lambda|^2+|\rho|^2)\wtilde{\Ab g}(\lambda)=-(|\lambda|^2+|\rho|^2)\what{g}(\lambda)$. Again by slice-projection theorem, Euclidean Fourier transform of $\Ab \Delta g$ at $\lambda$ is $\what{\Delta g}(\lambda)=-(|\lambda|^2+|\rho|^2)\what{g}(\lambda)$. The assertion now follows from the injectivity of the Fourier transform.
Using this we get for $g\in C^2(G//K)$ and $\Ab g=F\in S(\R^l)_W$,
\[\langle L_1BT, F\rangle =\langle L_1BT, \Ab g\rangle
=\langle BT, L_1\Ab g\rangle =\langle BT, \Ab \Delta g\rangle=\langle \Ab^\ast BT, \Delta g\rangle=\langle
T,  \Delta g\rangle =\langle \Delta T,   g\rangle\]
\[=\langle \Ab^\ast B \Delta T,   g\rangle
=\langle B \Delta T,   \Ab g\rangle =\langle B \Delta T,
F\rangle.\] This shows that $L_1 BT=B(\Delta T)$

The condition  $\Delta T_j= z T_{j+1}$ implies $B(\Delta T_j)=z
B(T_{j+1})$. Applying the identity $B(\Delta T)=L_1BT$ we have $L_1
BT_j=z BT_{j+1}.$

Next we shall show that there exists a fixed seminorm $\mu$ of
$S(\R^l)_W$ such that $|\langle BT_j, \psi\rangle|\le M\mu(\psi)$
for all $\psi\in S(\R^l)_W$. Indeed, using that $\Ab:C^2(G//K)\to
S(\R^l)_W$ is an isomorphism, for every $\psi\in S(\R^l)_W$ we have
a $\phi\in C^2(G//K)$ such that $\Ab(\phi)=\psi$ and a seminorm
$\mu$ on $S(\R^l)_W$ such that $\nu(\phi)\le \mu(\psi)$ for all such
pairs $\phi\in C^2(G//K)$ and $\psi\in S(\R^l)_W$. Hence,
\[ |\langle BT_j, \psi\rangle|=|\langle BT_j, \Ab
\phi\rangle|=|\langle T_j, \phi\rangle|\le M\nu(\phi)\le M
\mu(\psi).\] Thus the sequence $\{BT_j\}$ satisfies the hypothesis
of  Theorem \ref{Euclid-dist} and hence
\[L_1 BT_0=-|z|
BT_0.\] Using again the identity $B(\Delta T)=L_1BT$ we get $B(\Delta
T_0)=B(-|z| T_0)$. Since $B$ is injective we have, $\Delta
T_0=-|z|T_0$. This completes the proof.
\end{proof}
Now we shall withdraw the condition that $T_j$ are $K$-invariant.
\begin{theorem} \label{Symm-dist}
 If for a doubly infinite sequence
$\{T_j\}$ of $L^2$-tempered distributions on $X$, $\Delta T_j=
zT_{j+1}$ for some $z\in \C$ with $|z|\ge |\rho|^2$ and for a fixed
seminorm $\nu$ of $C^2(X)$, $|\langle T_j, \phi\rangle|\le
M\nu(\phi)$ for some $M>0$ for all $\phi\in C^2(X)$, then $\Delta
T_0=-|z| T_0$.
\end{theorem}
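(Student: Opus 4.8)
The plan is to reduce the general statement to the $K$-invariant case already settled in Theorem \ref{Symm-dist-radial}, by means of a one-parameter family of radializations. For each fixed $y\in G$ I would set
\[
S_j^{(y)}=\int_K \ell_{(yk)^{-1}}T_j\,dk,\qquad j\in\Z,
\]
where $\ell_g$ denotes left translation of distributions. First I would check that each $S_j^{(y)}$ is a $K$-invariant $L^2$-tempered distribution: each $\ell_{(yk)^{-1}}T_j$ is again a distribution on $G/K$ because right $K$-invariance is preserved by left translation, and the substitution $k\mapsto kk_0$ shows $\ell_{k_0}S_j^{(y)}=S_j^{(y)}$ for every $k_0\in K$, so $S_j^{(y)}\in C^2(G//K)'$. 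Since $\Delta$ commutes with left translations and with the $K$-integration, the relation $\Delta T_j=zT_{j+1}$ passes to $\Delta S_j^{(y)}=zS_{j+1}^{(y)}$.

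Next I would verify the uniform seminorm bound. For $\phi\in C^2(G//K)$,
\[
|\langle S_j^{(y)},\phi\rangle|\le\int_K|\langle T_j,\ell_{yk}\phi\rangle|\,dk\le M\sup_{k\in K}\nu(\ell_{yk}\phi),
\]
so it remains to dominate $\sup_{k\in K}\nu(\ell_{yk}\phi)$ by $C(y)\mu(\phi)$ for some seminorm $\mu$ of $C^2(X)$ and a constant $C(y)$ independent of $j$. This is precisely the continuity of left translation on the $L^2$-Schwartz space, made uniform over the compact set $yK$: the estimates $\phi_0(gx)\le C\phi_0(g)\phi_0(x)$ and $\sigma(gx)\le\sigma(g)+\sigma(x)$ control the weight $\phi_0(x)^{-1}(1+\sigma(x))^r$ in $\nu$ after translation, while the invariant differential operators transform into finitely many operators of the same order. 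I expect this uniform-continuity estimate to be the only genuinely technical point of the argument. Granting it, $\{S_j^{(y)}\}$ satisfies the hypotheses of Theorem \ref{Symm-dist-radial}, whence $\Delta S_0^{(y)}=-|z|S_0^{(y)}$ for every $y\in G$.

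It remains to descend from the radializations back to $T_0$. Writing $W_0=\Delta T_0+|z|T_0$ and testing $\Delta S_0^{(y)}=-|z|S_0^{(y)}$ against $\phi\in C^2(G//K)$, the self-adjointness of $\Delta$ together with $\Delta\ell_{yk}\phi=\ell_{yk}\Delta\phi$ unwinds the identity to
\[
\int_K\langle W_0,\ell_{yk}\phi\rangle\,dk=0\qquad\text{for every }y\in G,\ \phi\in C^2(G//K).
\]
Here I would specialize to $\phi=h_t$, the heat kernel. Since $h_t$ is $K$-biinvariant, $\ell_{yk}h_t=\ell_y h_t$ is independent of $k$, so the average is trivial and the displayed identity reads $\langle W_0,\ell_y h_t\rangle=0$ for all $y\in G$; that is, $W_0\ast h_t=0$ for every $t>0$. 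Finally, letting $t\to0$ and invoking $W_0\ast h_t\to W_0$ in the distribution sense (recorded in Section 2), I conclude $W_0=0$, i.e. $\Delta T_0=-|z|T_0$. The heat-kernel approximation is exactly what recovers the full, non-averaged distribution, and thereby circumvents the loss of angular information otherwise inherent in $K$-averaging.
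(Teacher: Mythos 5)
Your proposal is, in substance, the paper's own proof. Your radialization $S_j^{(y)}$ coincides with the paper's $\mathcal K(\ell_{y^{-1}}T_j)$, where $\mathcal K$ is the $K$-averaging operator: indeed $\int_K \ell_{yk}\psi\,dk=\ell_y\mathcal K(\psi)$, so $\langle S_j^{(y)},\psi\rangle=\langle \mathcal K(\ell_{y^{-1}}T_j),\psi\rangle$. The reduction to Theorem \ref{Symm-dist-radial} via commutation of $\Delta$ with translations and $K$-averaging is the same as in the paper, and your final heat-kernel step is exactly the injectivity argument in the first paragraph of the paper's proof (if all $K$-averages of translates vanish, then convolution with $h_t$ vanishes, and letting $t\to 0$ kills the distribution). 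If anything, your write-up is leaner: the paper's intermediate claims that $\mathcal K(\ell_y T_0)\neq 0$ forces $\mathcal K(\ell_y T_j)\neq 0$ are not needed for the conclusion, as the paper itself concedes parenthetically.

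The one genuine defect is the inequality you invoke for the crucial seminorm bound: $\phi_0(gx)\le C\phi_0(g)\phi_0(x)$ with an absolute constant $C$ is \emph{false}. Put $x=g^{-1}$: it would give $1=\phi_0(e)\le C\phi_0(g)\phi_0(g^{-1})=C\phi_0(g)^2$, contradicting the decay $\phi_0(g)\to 0$ as $\sigma(g)\to\infty$. Note also that the weight $\phi_0(\cdot)^{-1}$ requires a \emph{lower} bound $\phi_0(ykx)\ge c(y)\,\phi_0(x)$, so a submultiplicative upper bound is in any case aimed the wrong way unless applied with $(g,x)$ replaced by $(g^{-1},gx)$, which is precisely where it fails. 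The correct estimate, and the one the paper uses, is $\phi_0(x)\le C_y\,\phi_0(yx)$ for all $x\in G$ (\cite[Proposition 4.6.3, (vi)]{GV}); combined with $\sigma(yx)\le\sigma(y)+\sigma(x)$ this gives $\nu(\ell_{y^{-1}}\psi)\le C_y\nu(\psi)$, which is all that is needed. Moreover, the uniformity over $k\in K$ that you single out as the technical crux is actually vacuous at the point where you use it: your test functions $\phi$ lie in $C^2(G//K)$, hence are left $K$-invariant, so $\ell_{yk}\phi=\ell_y\phi$ for every $k\in K$ and $\langle S_j^{(y)},\phi\rangle=\langle T_j,\ell_y\phi\rangle$; the supremum over $k$ collapses to a single translate. (Even for general $\psi\in C^2(G/K)$, the constant is uniform over $yK$ for free, since $\phi_0(kx)=\phi_0(x)$ and $\sigma(yk)=\sigma(y)$.) With the correct estimate from \cite{GV} substituted for the false one, your argument is complete and coincides with the paper's.
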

\begin{proof}
  We need to use frequently the fact that $\Delta$ commutes with translations and
the $K$-averaging operator $\mathcal K$ defined in section 2.
It is clear from the condition $\Delta T_j=
zT_{j+1}$ that if one element of the sequence $\{T_j\}$ is zero, then every elements of the sequence is zero and we have nothing to prove. Therefore we assume that none of the $T_j$ are zero.
We fix $j\in \Z$. We claim that  there  is an $x\in G$ such that
$\ell_x T_j$ has nonzero $K$-invariant  part. Indeed if $\mathcal K(\ell_xT_j)=0$  for all $x\in G$, then
$\langle\ell_x T_j, h_t\rangle=0$ for all $t>0$ since  the heat kernel $h_t$ is a $K$-invariant function (see section 2). That is $T_j\ast
h_t\equiv 0$. But  $T_j\ast h_t\rightarrow T_j$ as $t\rightarrow 0$ in the sense of distribution.
Therefore $T_j=0$ which contradicts our assumption. We note that
this also shows that if for two $L^2$-tempered distribution $T$ and
$T'$, $\mathcal K(\ell_x T)=\mathcal K(\ell_xT')$ for all $x\in G$, then $T=T'$.
\vspace{.1in}

Next we claim that if  $\mathcal K(\ell_y T_0)\neq 0$ for
some $y\in G$, then $\mathcal K(\ell_y T_j)\neq 0$ for all $j\in \Z$. It is
enough to  show that if   $\mathcal K(\ell_y T_0)\neq 0$ then $\mathcal K(\ell_y T_{-1})\neq 0$ and $\mathcal K(\ell_y T_1)\neq
0$. Indeed if $\mathcal K(\ell_y T_{-1})= 0$ then
$\Delta \mathcal K(\ell_y T_{-1})= 0$ which implies $\mathcal K(\ell_y T_0)= 0$ as
$\Delta T_{-1}=zT_0$ for $z\neq 0$. On the other hand, if  $\mathcal K(\ell_y T_1)= 0$, then $\langle
\ell_y T_1, \psi\rangle=0$ for all $\psi\in C^2(G//K)$. That is
$\langle \ell_y \Delta T_0, \psi\rangle=0$
and hence $\langle \ell_y T_0, \Delta
\psi\rangle=0$. Using the characterization of the image of $C^2(G//K)$ under spherical Fourier transform (see section 2) we see that for any $\phi\in C^2(G//K)$,
$\what{\phi}(\lambda)(|\lambda|^2+|\rho|^2)^{-1}\in S(\R^l)_W$. Hence
$\phi$ can be written as $\phi=\Delta\psi$ for some $\psi\in
C^2(G//K)$. Thus  $\langle \ell_y T_0,
\phi\rangle=0$ for any $\phi\in C^2(G//K)$, i.e.  $\mathcal K(\ell_y T_0)=0$. \vspace{.1in}

Our aim now is to  show that for any $y\in G$, the sequence $\{\mathcal K(\ell_y T_j)\}$ of $K$-invariant  distributions satisfies the hypothesis of Theorem \ref{Symm-dist-radial}.
Since $\Delta$ commutes with the $K$-averaging operator  and translations, it follows from the hypothesis
$\Delta T_j=z T_{j+1}$ that $\Delta \mathcal K(\ell_y T_j)=z \mathcal K(\ell_y T_{j+1})$.

It now remains to show that for the seminorm $\nu$ of $C^2(G)$ given in the hypothesis of the theorem  and for any $\psi_1\in C^2(G//K)$, $|\langle \mathcal K(\ell_y T_j), \psi_1\rangle|\le C_y M\nu(\psi_1)$. First we note that for any $y\in G$ and  $\psi\in C^2(G)$, $|\nu(\ell_y\psi)|\le C_y\nu(\psi)$, where the constant $C_y$ depends only on $y$. Indeed, using $\phi_0(x)\le C_y\phi_0(yx)$ for all $x\in G$ (\cite[Proposition 4.6.3., (vi)]{GV}) and triangle inequality $\sigma(yx)\le \sigma(x)+\sigma(y)$, we have,
\begin{eqnarray*}
\nu(\ell_y\psi)&=&\sup_{x\in X}
|(\imath g_1)g_2\,\psi(y^{-1}x)|\phi_0(x)^{-1}(1+\sigma(x))^L\,\, (D\in \mathcal U(\mathfrak g_\C), L>0)\\&=&
\sup_{x\in X} |(\imath g_1)g_2\,\psi(x)|\phi_0(yx)^{-1}(1+\sigma(yx))^L\\
&\le& C \sup_{x\in X} |(\imath g_1)g_2\, \psi(x)|\phi_0(x)^{-1}(1+\sigma(x))^L (1+\sigma(y))^L=C_y\nu(\psi),
\end{eqnarray*} where $L>0$ and $g_1, g_2\in \mathcal U(\mathfrak g_\C)$ are fixed.
Since  $|\langle T_j, \psi\rangle|\le
M\nu(\psi)$ for any $\psi\in C^2(G)$, it follows that  for  $\psi_1\in
C^2(G//K)$,
\begin{eqnarray*}
|\langle \mathcal K(\ell_y T_j), \psi_1\rangle|=|\langle\ell_yT_j,
\psi_1\rangle=|\langle T_j, \ell_{y^{-1}}\psi_1\rangle|\le
M\nu(\ell_{y^{-1}}\psi_1)\le C_y M \nu(\psi_1).
\end{eqnarray*}

From Theorem  \ref{Symm-dist-radial} we conclude that
$$\Delta \mathcal K(\ell_y(T_0))=-|z| \mathcal K(\ell_y(T_0))\text{  for all } y\in G.$$
(Note that if  $\mathcal K(\ell_y(T_0))=0$ for some $y\in G$, then  the identity    $\Delta \mathcal K(\ell_y(T_0))=-|z| \mathcal K(\ell_y(T_0))$ is trivial.)
Again appealing to the fact that
$\Delta$ commutes with translations and $K$-averaging operator  we have
$\mathcal K(\ell_y(\Delta T_0))=\mathcal K(\ell_y(-|z| T_0))$  for all $y\in G$. This implies (see the first paragraph of the proof) that $\Delta T_0=-|z|T_0$ which is the assertion.
\end{proof}

We define $\Delta_1=-(\Delta+|\rho|^2)$. Then a step by step adaptation of the above proof yields the following, which we shall use in the last section.
\begin{theorem} \label{Symm-dist-Delta1}
 If for a doubly infinite sequence
$\{T_j\}$ of $L^2$-tempered distributions on $X$, $\Delta_1 T_j=
zT_{j+1}$ for some nonzero  $z\in \C$  and for a fixed
seminorm $\nu$ of $C^2(X)$, $|\langle T_j, \phi\rangle|\le
M\nu(\phi)$ for some $M>0$ for all $\phi\in C^2(X)$, then $\Delta_1
T_0=|z| T_0$.
\end{theorem}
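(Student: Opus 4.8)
The plan is to mirror the three-step architecture of the proof of Theorem \ref{Symm-dist}, replacing $\Delta$ by $\Delta_1=-(\Delta+|\rho|^2)$ and tracking how the spectral picture shifts. The first task is to identify the Euclidean operator that $\Delta_1$ intertwines with under the Abel transform. Since $\Delta_1$ is self-adjoint and $\Delta_1\phi_\lambda=|\lambda|^2\phi_\lambda$, the slice-projection theorem gives, for $g\in C^2(G//K)$, that the Euclidean Fourier transform of $\Ab\Delta_1 g$ at $\lambda$ equals $\what{\Delta_1 g}(\lambda)=|\lambda|^2\what g(\lambda)=|\lambda|^2\wtilde{\Ab g}(\lambda)$. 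As $|\lambda|^2$ is precisely the Fourier multiplier of $-L$, injectivity of the Euclidean transform yields $-L\,\Ab g=\Ab\Delta_1 g$. Passing to adjoints exactly as in Theorem \ref{Symm-dist-radial}, I obtain the intertwining identity $B(\Delta_1 T)=-L\,BT$ for $T\in C^2(G//K)'$, where $B=(\Ab^\ast)^{-1}$.

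Next I would establish the Euclidean input: for a $W$-invariant sequence $\{S_j\}$ with $-L\,S_j=zS_{j+1}$, $z\neq0$, and a uniform seminorm bound, one has $-L\,S_0=|z|S_0$. On the Fourier side $|x|^2\wtilde{S_j}=z\,\wtilde{S_{j+1}}$, so $\wtilde{S_0}=(z/|x|^2)^{j}\wtilde{S_j}$, and the support argument of Theorem \ref{Euclid-dist} applies with the pair $(|x|^2,|z|)$ in place of $(|x|^2+|\rho|^2,|z|)$: testing against $\phi$ supported in $\{|x|\ge\sqrt{|z|}+\varepsilon\}$ as $j\to+\infty$ and in $\{|x|\le\sqrt{|z|}-\varepsilon\}$ as $j\to-\infty$ forces $\wtilde{S_0}$ onto the sphere $\{|x|=\sqrt{|z|}\}$, after which the Howe--Reese finite-order argument (\cite{Howd-Reese}) upgrades this to $(|x|^2-|z|)\wtilde{S_0}=0$, i.e. $-L\,S_0=|z|S_0$. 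This is exactly where the hypothesis relaxes: the sphere is nonempty precisely when $z\neq0$, so the constraint $|z|\ge|\rho|^2$ of Theorem \ref{Euclid-dist} is replaced by $z\neq0$. Feeding this back through $B$ as in Theorem \ref{Symm-dist-radial} yields the $K$-invariant case of the assertion, $\Delta_1 T_0=|z|T_0$.

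To remove $K$-invariance I would run the $K$-averaging argument of Theorem \ref{Symm-dist} essentially verbatim: since $\Delta_1$ commutes with $\mathcal K$ and with left translations, the sequence $\{\mathcal K(\ell_y T_j)\}_{j}$ is $K$-invariant, satisfies $\Delta_1\mathcal K(\ell_yT_j)=z\,\mathcal K(\ell_yT_{j+1})$, and inherits the seminorm bound $|\langle\mathcal K(\ell_yT_j),\psi_1\rangle|\le C_yM\nu(\psi_1)$ from the same estimate using $\phi_0(x)\le C_y\phi_0(yx)$ and $\sigma(yx)\le\sigma(x)+\sigma(y)$. The $K$-invariant case then gives $\Delta_1\mathcal K(\ell_yT_0)=|z|\mathcal K(\ell_yT_0)$ for every $y\in G$ (trivially so where $\mathcal K(\ell_yT_0)=0$), whence $\mathcal K(\ell_y\Delta_1 T_0)=\mathcal K(\ell_y|z|T_0)$ for all $y$, and the uniqueness property of the first paragraph of that proof forces $\Delta_1 T_0=|z|T_0$.

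The one place where the adaptation is not literally ``step by step'' — and the step I expect to be the main obstacle — is the auxiliary claim in Theorem \ref{Symm-dist} that $\mathcal K(\ell_yT_0)\neq0$ propagates to all indices, whose proof expressed an arbitrary $\phi\in C^2(G//K)$ as $\Delta\psi$ using that $\what\phi(\lambda)(|\lambda|^2+|\rho|^2)^{-1}\in S(\R^l)_W$. For $\Delta_1$ the symbol is $|\lambda|^2$, which vanishes at $\lambda=0$ (indeed $\Delta_1\phi_0=0$), so $\Delta_1$ is not onto $C^2(G//K)$ and this surjectivity argument breaks down. The clean way around is to note that this propagation claim is never actually needed: the conclusion is obtained by applying the $K$-invariant case directly to $\{\mathcal K(\ell_yT_j)\}$ — which satisfies the required hypotheses whether or not individual terms vanish — and then invoking uniqueness, exactly as in the parenthetical remark of Theorem \ref{Symm-dist}. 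The spurious $\phi_0$-component that the broken argument seemed to threaten is automatically excluded, since the $K$-invariant case, proved through the airtight Abel--Euclidean route, already returns $\Delta_1\mathcal K(\ell_yT_0)=|z|\mathcal K(\ell_yT_0)$ with $|z|\neq0$.
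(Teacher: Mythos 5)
Your proposal is correct and is precisely the ``step by step adaptation'' of Theorems \ref{Euclid-dist}, \ref{Symm-dist-radial} and \ref{Symm-dist} that the paper invokes without writing out: the intertwining $B(\Delta_1 T)=-L\,BT$ via the slice-projection theorem, the Euclidean support argument forcing $\wtilde{S_0}$ onto the sphere $\{|x|=\sqrt{|z|}\}$, and the $K$-averaging plus heat-kernel uniqueness step all match the paper's template. Your two supplementary observations --- that the symbol $|\lambda|^2$ vanishing at the origin is exactly what relaxes $|z|\ge|\rho|^2$ to $z\neq 0$, and that the nonvanishing-propagation paragraph of Theorem \ref{Symm-dist} (whose surjectivity argument indeed fails for $\Delta_1$) is logically superfluous --- are both correct and supply details the paper's one-line proof omits.
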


\subsection{Proof of Theorem \ref{symm-main}} Using  Theorem \ref{Symm-dist} we shall now prove Theorem \ref{symm-main}.
\begin{proof}
By Theorem \ref{Symm-dist}, it suffices to show that  for all $j\in \Z$, $f_j\in C^2(X)'$ and  $|\langle f_j, \phi\rangle|\le C\gamma(\phi)$ for all $\phi\in C^2(X)$, for a fixed seminorm  $\gamma$  of $C^2(X)$ defined by $\gamma(\phi)=\sup_{x\in X} |\phi(x)|(1+\sigma(x))^M \phi_0^{-1}(x)$ with $M>0$  sufficiently large.  Indeed,
\begin{align*} |\int_X f_j(x)\phi(x)dx|&\le C\int_{K\times \mathfrak a^+} |f_j(k \exp H)| |\phi(k\exp H)|J(H)dH dk\\
&=C\gamma(\phi)\int_{\mathfrak a^+}\left(\int_K |f_j(k\exp H)|^p\right)^{1/p}\frac{\phi_0(\exp H)}{(1+|H|)^M} J(H) dH\, dk\\
&\le C\gamma(\phi)\int_{\mathfrak a^+} \phi_0(\exp H)^2 (1+|H|)^{-M} J(H)dH = C\gamma(\phi),
\end{align*}
  Moreover when $\alpha=0$, we apply \cite[Theorem 3.4]{BOS}.
\end{proof}

\section{Roe-Strichartz theorem for Distinguished Laplacian}
The main result of this section is an analogue of Theorem  \ref{symm-main} for a right invariant second order differential operator
which in the context of $\R^l$ is nothing but the Laplace Beltrami
operator of $\R^l$.  This is known as the {\em distinguished
Laplacian} of $X$. We shall make it precise now.  Let $G=NAK$ be the
Iwasawa decomposition of $G$ and $S$ be the solvable Lie group $N\rtimes A$.
We can identify the manifold $S$ with the Riemannian symmetric space
$G/K.$ The image of the $G$-invariant measure on $G/K$ under this
identification corresponds to the left Haar measure on $S$ and the
Riemannian metric on $G/K$ corresponds to a left-invariant metric on
$S.$ In a similar fashion we can identify functions and differential
operators on $G/K$ with those on $S.$ To define the distinguished
Laplacian $\mathcal L$ we first consider the inner product $\langle
X,Y\rangle=B(X,\theta Y)$ on $\frak g$ where $B$ is the Cartan
killing form and $\theta$ is a Cartan involution. With respect to
the above inner product the decomposition $\frak s=\frak
a\oplus\frak n$ is orthogonal. We choose an orthonormal basis
$\{H_1,  \ldots, H_l,  X_1, \ldots ,X_m\}$ of $\frak s$ such that $\text
{span}\{H_1,\ldots ,H_l\}=\frak a$,  $\text
{span}\{X_1,\ldots ,X_m\}=\frak n$ and we view these elements as right
invariant vector fields in the usual way. The distinguished
Laplacian $\mathcal L$ is defined as (see \cite{CGHM, GM})
\[\mathcal L=- [H_1^2+\cdots + H_l^2+\frac 12(X_1^2+\cdots +X_m^2)].\]  The operator $\mathcal L$ is essentially self adjoint on $L^2(S)$ with respect to the left Haar measure of $S$ and enjoys a special relationship with the Laplace-Beltrami operator $\Delta$ when viewed as a left-invariant operator on the solvable group $S$. This relation is explained below as it is crucial for our purpose.
For a function $f$ we define $\wtilde{f}(x)=f(x^{-1})$ for $x\in S$, where $x^{-1}$ is the inversion of the group $S$. We recall that $\Delta_1$ denotes the operator $-(\Delta+|\rho|^2)$.
  It then follows that for all $x\in S$ (see \cite[p.108]{CGHM}),
\begin{equation}\label{relate-Laplacians}
\delta^{1/2}(x)\, (\Delta_1\, \delta^{1/2}\wtilde{f})(x^{-1})\,\,
=\mathcal L f(x), \text{ equivalently } \Delta_1(\delta^{1/2}\wtilde{f})(x)=\delta^{1/2}(x)(\mathcal L f)(x^{-1}),
\end{equation} where we recall $\delta(an)=e^{-2\rho(\log a)}$, for $a\in A$ and $n\in N$. It follows trivially that
$\Delta_1f=\lambda f$ for some $\lambda\in \C$ if and only if  $\mathcal L (\delta^{1/2}\wtilde{f})=\lambda (\delta^{1/2}\wtilde{f})$.
This relation between the Laplacians yields  Theorem \ref{thm-distinguished} stated in the introduction.
\begin{proof}[Proof of Theorem {\em \ref{thm-distinguished}}]
Let $g_j=\delta^{1/2} \wtilde{f}_j$ for all $j\in \Z$.  By
(\ref{relate-Laplacians})
\[\Delta_1 g_j = \Delta_1\delta^{1/2}\wtilde{f}_j=\delta^{1/2}\wtilde{\mathcal L f_j}=
 \alpha\, \delta^{1/2}\, \wtilde{f}_{j+1}=\alpha\, g_{j+1}.\]
It is also clear that $|f_j(x)|<C \delta(x)$ implies $|g_j(x)|\le C
\delta^{-1/2}(x)$.
We recall that $\delta^{-1/2}(x)=e^{-\rho(H(x^{-1}))}$ and  $\mathcal K(\delta^{-1/2})(x)=\phi_0(x)$.  For  a function $\phi\in C^2(X)$,
\begin{align*} |\int_Xg_j(x)\phi(x)dx|&\le C\int_X \delta(x)^{-1/2}|\phi(x)|dx\\&\le C\gamma(\phi)\int_X\delta(x)^{-1/2}\phi_0(x)(1+\sigma(x))^{-M}dx\\ &= C \gamma(\phi)\int_X \phi_0(x)^2 (1+\sigma(x))^{-M}dx =C\gamma(\phi),
\end{align*}
where $\gamma$ is a seminorm of $C^2(X)$ defined by $\gamma(\phi)=\sup_{x\in X} |\phi(x)|(1+\sigma(x))^M \phi_0^{-1}(x)$ for some sufficiently large $M>0$.   Thus the sequence $\{g_j\}$ satisfies the
hypothesis of Theorem \ref{Symm-dist-Delta1} and hence  $\Delta_1 g_0= \alpha\, g_0$. Using
(\ref{relate-Laplacians}) again we get ${\mathcal L} f_0= \alpha f_0$
which is the assertion.
\end{proof}
We conclude with the  observation that despite the fact that the distinguished Laplacian $\mathcal L$  has some similarities with the  usual Laplacian $L$ on $\R^l$ (see Introduction), a straightforward analogue of the Euclidean result of Strichartz \cite{Str} is not a possibility. Following counter example will establish this.

\noindent {\bf Counter Example}: We will produce two bounded
eigenfunctions $\psi_1$ and $\psi_2$ of $\mathcal L$ with
eigenvalues $-4|\rho|^2$ and $4|\rho|^2$ respectively. We can then
define $f_j=(-1)^k\psi_1+\psi_2$, $k\in \Z$. It is then clear that
the above sequence is uniformly bounded with $\mathcal L
(f_j)=4|\rho|^2((-1)^{k+1}\psi_1+\psi_2)=4|\rho|^2 f_{j+1}$ but
$f_0=\psi_1+\psi_2$ is not an eigenfunction of $\mathcal L$. We
define $\psi_1=\delta^{1/2}\phi_{2\rho}.$ Since
$\Delta_1(\phi_{2\rho})=-(\Delta+|\rho|^2I)\phi_{2\rho}=4|\rho|^2\phi_{2\rho}$
it follows from  (\ref{relate-Laplacians}) that $\mathcal L
\psi_1=4|\rho|^2\psi_1$. Since $|\phi_{2\rho}(x)|\leq
C_{\rho}e^{-\rho\sigma(x)}$ and $\sigma(na)\geq |\log a|$ it
follows that
\[|\psi_1(na)|\leq C_{\rho}e^{-\rho \log a}e^{-\rho
|\log a|}\leq C.\] Let $\psi_2$ be the constant function $1$. We
shall show that $\psi_2$ is an eigenfunction of $\mathcal L$ with
eigenvalue $-4|\rho|^2.$ We define
$F_{\lambda}(na)=e^{-(i\lambda+\rho)H(a^{-1}n^{-1})}$ then $\wtilde{
F}_{2i\rho}(na)=e^{\rho H(na)}=e^{\rho\log a}$ and hence
$\delta^{1/2}(na)\wtilde{F}_{2i\rho}(na)=1.$ But since $F_{2i\rho}$ is
an eigenfunction of $\Delta$ with eigenvalue $3|\rho|^2$ it follows
that $\Delta_1 F_{2i\rho}=-4|\rho|^2 F$. Using
(\ref{relate-Laplacians}) we have
\[\mathcal L 1=\mathcal L(\delta^{1/2}\wtilde{F})
=\delta^{1/2}(\Delta_1\delta^{1/2}\delta^{-1/2}F)\wtilde{}=-4|\rho|^2\delta^{1/2}\wtilde{F}=-4|\rho|^2 1.\]
\section{Concluding Remarks}
{\bf 1.} In view of the results in \cite{KRS-2} and in \cite{BOS}, it is natural to  expect the following result.
\begin{conjecture} \label{Conjecture} Fix $q\in (1,2)$.
Let $\{f_j\}_{j\in\N}$ be an  infinite sequence of measurable functions on
$X$ such that  for all $j\in\N$:
 \begin{enumerate}
 \item[(i)] $\Delta f_j=(4\rho^2/qq') f_{j+1}$,
 \item[(ii)] for   a fixed $p\ge 1$, $\|f_j(\cdot a)\|_{L^p(K)}\le C_{p}  \phi_{i\gamma_q\rho}(a)$ for all $a\in \overline{A^+}$ and for a  constant $C_{p}>0$ depending only on  $p$.
 \end{enumerate}
Then $\Delta f_0=-(4\rho^2/qq') f_0$. In particular if  $p>1$ then $f_0(x)=P_{i\gamma_q\rho}F(x)$ for some $F\in L^p(K/M)$ and if $p=1$ then $f_0=P_{i\gamma_q\rho}\mu(x)$ for some signed measure $\mu$ on $K/M$.
\end{conjecture}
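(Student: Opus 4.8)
The plan is to mirror the three-step reduction used for Theorem~\ref{symm-main} --- an inequality for tempered distributions, reduction to the $K$-invariant case, and transfer to a spectral statement via a Paley--Wiener type isomorphism --- but to carry it out in the $L^q$-Schwartz setting rather than the $L^2$ one. The point is that the machinery of Theorems~\ref{Euclid-dist}--\ref{Symm-dist} cannot be invoked directly: here $-(4|\rho|^2/qq')=-(|\lambda|^2+|\rho|^2)$ with $\lambda=i\gamma_q\rho$ \emph{purely imaginary} (recall $\gamma_q=2/q-1\in(0,1)$ and $1-\gamma_q^2=4/qq'$), so $|z|=4|\rho|^2/qq'<|\rho|^2$ and the hypothesis $|z|\ge|\rho|^2$ of Theorem~\ref{Euclid-dist} fails; equivalently the majorant $\phi_{i\gamma_q\rho}$ grows too fast to be an $L^2$-tempered distribution. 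I would therefore work with the Harish-Chandra $L^q$-Schwartz space $C^q(G)$ (seminorm weight $\phi_0^{2/q}$), whose spherical transform is an isomorphism onto the $W$-invariant Schwartz functions on the \emph{closed} tube $T_{\gamma_q}=\{\lambda\in\mathfrak a^\ast_\C:\mathrm{Im}\,\lambda\in\mathrm{conv}(W\!\cdot\!\gamma_q\rho)\}$ (Trombi--Varadarajan, Anker \cite{Ank1}), under which $\Delta$ becomes multiplication by $-(|\lambda|^2+|\rho|^2)$.

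First I would check that each $f_j$ lies in $C^q(X)'$ with a seminorm bound uniform in $j$. Using the Harish-Chandra asymptotics one has $\phi_{i\gamma_q\rho}(\exp H)\asymp\phi_0(\exp H)^{2/q'}$ up to a polynomial in $|H|$; then the polar-coordinate computation of the proof of Theorem~\ref{symm-main}, with H\"older over $K$ turning (ii) into $\int_K|f_j(k\exp H)|\,dk\le C_p\,\phi_{i\gamma_q\rho}(\exp H)$, produces the integrand $\phi_0^{2/q'}\cdot\phi_0^{2/q}\,J=\phi_0^2\,J$ weighted by $(1+|H|)^{-M}$, which is integrable for $M$ large. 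This is exactly why $\phi_{i\gamma_q\rho}$ is the correct majorant. The passage to the $K$-invariant case is then the verbatim $\mathcal K$-averaging and translation argument of Theorem~\ref{Symm-dist}, the only change being the translation estimate $\phi_0(x)^{2/q}\le C_y\,\phi_0(yx)^{2/q}$, which is the $(2/q)$-th power of \cite[Prop.~4.6.3(vi)]{GV}.

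The spectral heart is the $K$-invariant case, applied to $\{\mathcal K(\ell_y T_j)\}$ for each fixed $y$. Writing $U_j$ for the spherical transform (a $W$-invariant distribution on $T_{\gamma_q}$), the recursion becomes $U_{j+1}=MU_j$ with $M=-(|\lambda|^2+|\rho|^2)/z$, so $U_j=M^jU_0$ for $j\ge0$. The elementary estimate, with $\lambda=\xi+i\eta$ and $|\eta|\le\gamma_q|\rho|$, is $\big||\lambda|^2+|\rho|^2\big|\ge|\xi|^2-|\eta|^2+|\rho|^2\ge(1-\gamma_q^2)|\rho|^2=z$, with equality exactly on the finite set $E=\{iw\gamma_q\rho:w\in W\}$ of boundary vertices. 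Hence $|M^{-1}|\le1$ on $T_{\gamma_q}$ and $<1$ off $E$, so for $\Phi\in C_c^\infty(T_{\gamma_q}\setminus E)^W$ the identity $\langle U_0,\Phi\rangle=\langle U_j,M^{-j}\Phi\rangle$ together with the uniform bound give $\langle U_0,\Phi\rangle=0$ as $j\to\infty$; this is precisely where a \emph{one-sided} sequence suffices, because the target eigenvalue sits at the bottom edge of the $L^q$-spectrum. Thus $\mathrm{supp}\,U_0\subseteq E$. To upgrade this to $gU_0=0$, where $g=|\lambda|^2+|\rho|^2-z$, I would run the Howden--Reese order reduction: $U_0$ has finite order $N$, $g$ has a simple zero at each point of $E$, so $U_j=(-1)^j(1+g/z)^jU_0=(-1)^j\sum_{k=0}^N\binom{j}{k}z^{-k}g^kU_0$ is a polynomial in $j$ of degree $\le N$, and boundedness forces its positive-degree coefficients to vanish, giving $g^NU_0=\cdots=gU_0=0$. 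Undoing the averaging and translations as in Theorem~\ref{Symm-dist} yields $\Delta f_0=-(4|\rho|^2/qq')f_0$, after which the representation $f_0=P_{i\gamma_q\rho}F$ (resp.\ $P_{i\gamma_q\rho}\mu$) is exactly the characterization of Poisson transforms in \cite{BOS}, whose Hardy-type hypothesis is condition~(ii).

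The step I expect to be the main obstacle is the distribution theory on the \emph{closed} tube $T_{\gamma_q}$. In the $L^2$ case the relevant spectral set is a sphere in the interior of $\mathfrak a^\ast$, whereas here the support set $E$ lies on the boundary of the tube, precisely where the $L^q$ Paley--Wiener/Schwartz isomorphism and its transpose are most delicate. One must verify that the spherical transform carries $C^q(G//K)'$ isomorphically onto the $W$-invariant distributions on the closed tube and intertwines $\Delta$ with multiplication by $-(|\lambda|^2+|\rho|^2)$; that such distributions have finite order near $E$; and that multiplication by $g$, the finite expansion of $(1+g/z)^j$, and the vanishing $g^k\partial^\beta\delta_p=0$ for $k>|\beta|$ remain valid for distributions carried by boundary vertices. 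Once this boundary bookkeeping is secured, the remaining steps are routine adaptations of the arguments already in the paper.
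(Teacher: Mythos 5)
First, a point of orientation: the paper does not prove this statement at all --- it appears in the concluding remarks precisely as a \emph{conjecture}, motivated by \cite{KRS-2} and \cite{BOS}, so there is no proof of record to compare yours against; your proposal must stand on its own. Much of your setup is sound and, in my view, correctly identifies the natural framework: the $L^q$-Schwartz space $C^q(G)$ with weight $\phi_0^{2/q}$, the bound $\phi_{i\gamma_q\rho}\asymp\phi_0^{2/q'}$ (up to polynomial factors) making condition (ii) exactly the right majorant for a uniform $C^q$-seminorm estimate, the verbatim transfer of the $\mathcal K$-averaging reduction of Theorem \ref{Symm-dist}, and the key spectral observation that $\left|\langle\lambda,\lambda\rangle+|\rho|^2\right|\ge(1-\gamma_q^2)|\rho|^2$ on the tube with equality exactly at $E=\{iw\gamma_q\rho: w\in W\}$, which explains why a one-sided sequence should suffice here. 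However, the spectral heart of your argument has a genuine gap, and it is more than ``boundary bookkeeping.'' You pair $U_0$ with test functions $\Phi\in C_c^\infty(T_{\gamma_q}\setminus E)^W$, but the test-function space in this setting --- the image of $C^q(G//K)$ under the spherical transform (Trombi--Varadarajan, \cite{Ank1}) --- consists of functions holomorphic in the \emph{interior} of the tube $T_{\gamma_q}$. It contains no compactly supported functions and no function vanishing on any open set; cutoffs and partitions of unity do not exist there. Consequently $C^q(G//K)'$ does not sit inside ``$W$-invariant distributions on the closed tube'' (there is no inclusion of $C_c^\infty(T_{\gamma_q})$ into the image space to transpose); its elements are analytic functionals, for which distributional support is not even well defined (carriers are non-unique). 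Both your support-shrinking step and the subsequent Howard--Reese step --- which needs the structure theorem that a distribution supported at finitely many points is a finite sum of derivatives of Dirac deltas --- collapse as stated. This is exactly what separates $q<2$ from $q=2$: at $q=2$ the tube degenerates to $\mathfrak a^\ast$, the image space is $S(\R^l)_W$, and genuine tempered-distribution support arguments are available; that is why the paper can prove Theorem \ref{symm-main} via Theorems \ref{Euclid-dist}--\ref{Symm-dist} but only conjecture the present statement.

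There is a plausible repair, and identifying it would be the real content of a proof: since $|M^{-1}|<1$ off $E$, and near $E$ one has $\mathrm{Re}\bigl(\langle\lambda,\lambda\rangle+|\rho|^2\bigr)-z\gtrsim d(\lambda,E)^2$, one can hope to show that $\mu(M^{-j}\Phi)\to0$ for every $\Phi$ in the image space vanishing to sufficiently high order (relative to the fixed seminorm $\mu$) at the finite set $E$ --- the factors $j^k$ coming from differentiating $M^{-j}$ being beaten by $d(\lambda,E)^{L}e^{-cjd(\lambda,E)^2}$. Then $U_0$ annihilates a closed subspace of finite codimension, hence is a finite combination of derivatives of point evaluations at $E$ (pure linear algebra, no support theory needed), after which your $(1+g/z)^j$ polynomial-in-$j$ argument can run. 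But this substitution of Taylor-vanishing and finite-codimension arguments for localization, together with the $C^q$ analogues of the technical ingredients (e.g.\ $h_t\in C^q(G//K)$ and $T\ast h_t\to T$ in $C^q(G/K)'$ for the averaging reduction, and the full strength of \cite{BOS} at $\lambda=i\gamma_q\rho$ for the final Poisson representation), is precisely what is missing; it is not a routine adaptation of the paper's $L^2$ machinery, which is presumably why the authors left the statement as a conjecture.
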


{\bf 2.} A recent paper (\cite{NPP}) studies the $L^p$-Schwartz space isomorphisms and related analysis in the context of Heckman-Opdam  hypergeometric functions, which generalizes analysis of $K$-biinvariant functions on a noncompact connected semisimple Lie group with finite centre.   It should be  possible to prove an analogue of our result in this set-up, through similar steps.


\begin{thebibliography}{999}
\bibitem{Ank1} Anker, ~J-P. {\em The spherical Fourier transform of rapidly
decreasing functions. A simple proof of a characterization due to
Harish-Chandra, Helgason, Trombi, and Varadarajan}.  J. Funct. Anal.
96  (1991),  no. 2, 331--349.
\bibitem{Bag-Sita} Bagchi, S.  C.; Sitaram, A. {\em Spherical mean periodic
functions on semisimple Lie groups}. Pacific J. Math. 84 (1979), no.
2, 241–-250.
\bibitem{BOS}
 Ben Sa\"{i}d, ~S; Oshima, ~T.; Shimeno, ~N. {\em Fatou's theorems and Hardy-type spaces for eigenfunctions of the invariant differential operators on symmetric spaces}. Int. Math. Res. Not. 2003, no. 16, 915--931.
\bibitem{CGHM}  Cowling, ~M.; Giulini, ~S.; Hulanicki, ~A.; Mauceri, ~G. {\em Spectral multipliers for a distinguished
Laplacian on certain groups of exponential growth}. Studia Math. 111
(1994), no. 2, 103–-121.
\bibitem{egu79} Eguchi, ~M. {\em Asymptotic expansions of Eisenstein integrals and Fourier transform on symmetric spaces}.
J. Funct. Anal. 34 (1979), no. 2, 167–216.
\bibitem{GV} Gangolli, ~R; Varadarajan, ~V. ~S. {\em Harmonic analysis of spherical functions on real reductive groups}.
 Springer-Verlag, Berlin, 1988.
\bibitem{GM}
  Giulini, ~S.; Mauceri, ~G. {\em Analysis of a distinguished Laplacian on solvable Lie groups}. Math. Nachr. 163 (1993), 151–-162,
\bibitem{Helga-2}
Helgason, ~S. {\em Groups and geometric analysis. Integral geometry,
invariant differential operators, and spherical functions}. Pure and
Applied Mathematics, 113. Academic Press, Inc., Orlando, FL, 1984
\bibitem{Helga-3}
Helgason, ~S. {\em Geometric analysis on symmetric spaces}.
Mathematical Surveys and Monographs, 39. Amer. Math. Soc.,
Providence, RI, 1994.
\bibitem{Helga-Abel} Helgason, ~S. {\em The Abel, Fourier and Radon transforms on symmetric spaces}. Indag.
Math.  16 (2005), no. 3--4,
\bibitem{Howd-Reese}
Howard, ~R.; Reese, ~M. {\em Characterization of eigenfunctions by
boundedness conditions}. Canad. Math. Bull. 35 (1992), no. 2,
204--213.
\bibitem{KRS-2} Kumar, ~P.; Ray, ~S. ~K.; Sarkar, ~R. ~P. {\em Charaterization of almost $L^p$-eigenfunctions of the Laplace-Beltrami operator}. preprint 2011,
http://arxiv.org/abs/1204.1127
\bibitem{NPP} Narayanan, E. K.; Pasquale, A.; Pusti S. {\em Asymptotics of Harish-Chandra expansions, bounded hypergeometric funcions asscoaited with root systems, and applications}
preprint 2012, http://arxiv.org/abs/1201.3891
\bibitem{Roe} Roe, ~J.
{\em A characterization of the sine function}. Math. Proc. Cambridge
Philos. Soc. 87 (1980), no. 1, 69--73.
\bibitem{Stoll} Stoll, M. {\em Hardy-type spaces of harmonic functions on symmetric spaces
of noncompact type}.  J. Reine Angew. Math. 271 (1974), 63–-76.
\bibitem{Str}
 Strichartz, ~R. ~S. {\em Characterization of eigenfunctions of the Laplacian by boundedness conditions}.
 Trans. Amer. Math. Soc. 338 (1993), no. 2, 971--979.
\end{thebibliography}
\end{document}